\documentclass[11pt]{article}
\usepackage{amsmath, amsthm, amssymb, amsfonts}
\usepackage{geometry}
\usepackage{hyperref}
\usepackage{booktabs}
\usepackage{graphicx}
\usepackage{placeins}

\newtheorem{theorem}{Theorem}
\newtheorem{proposition}[theorem]{Proposition}

\newtheorem{lemma}[theorem]{Lemma}
\newtheorem{remark}{Remark}
\newtheorem{definition}{Definition}
\newcommand{\eps}{\varepsilon}
\newcommand{\bOmega}{{\partial\Omega}}
\newcommand{\R}{\mathbb{R}}
\newcommand{\T}{\mathbb{T}}

\newcommand{\aeps}{a(x/\eps)}
\newcommand{\astar}{a^*}
\newcommand{\teps}{\theta^\eps}
\newcommand{\tstar}{\theta^*}
\newcommand{\ttilde}{\tilde\theta}
\newcommand{\combo}{\ttilde + \tstar}
\newcommand{\chin}{\chi_n}
\newcommand{\dnu}{\partial_n u^0}

\newcommand{\dv}{\,\mathrm{d}}
\newcommand{\Lp}{L_\perp(n)}
\newcommand{\Nn}{N_n}
\newcommand{\G}{G}
\newcommand{\codeurl}{https://github.com/sharimoskow/bltail}

\title{A cell problem for the boundary density\\[4pt]
       and approximation of boundary correctors in homogenization theory}
\author{S.~Moskow}
\date{}

\begin{document}
\maketitle

\begin{abstract}
We study the boundary corrector $\teps$ in the two-scale expansion for
periodic elliptic problems with Dirichlet conditions.  By results of
G\'{e}rard-Varet--Masmoudi and Prange, on smooth uniformly convex domains
$\teps$ converges to the solution $\tstar$ of a homogenized problem with
boundary data given by the boundary layer tail $d(n)$, which is determined by
a half-space problem for each normal $n$.  In the lifted form of
G\'{e}rard-Varet and Masmoudi, $d(n)$ is given explicitly by the cell
correctors and the Dirichlet-to-Neumann operator $N_n$ of this half-space problem.  
Our main result is that $N_n$ satisfies a nonlinear equation on the period
cell alone, so that $d(n)$ and the boundary density that represents it can
be computed on the cell. This equation is a Riccati equation in which the derivative in depth is replaced by a commutator with the
derivative along $n$.  This cell problem can therefore be solved by a Riccati doubling algorithm, and we can compute $d(n)$ at irrational normals without rational
approximation.  Dropping the commutator term in the cell equation gives an operator
square root  approximation for $N_n$ 
that captures the leading term of $d(n)$ for small contrast, for the truncated problem.  Finally, we
approximate $\teps$ at finite $\eps$ by the sum of two solutions of constant
coefficient problems, one with oscillatory boundary data and the other
$\tstar$, and test it on the disk.
\end{abstract}

\section{Introduction}

In the two-scale expansion~\cite{BLP78} for elliptic problems with periodically
oscillating coefficients $a(x/\eps)$, the boundary corrector $\teps$ compensates
for the oscillatory mismatch at the boundary of the domain.  While the interior part of
the expansion is well understood, $\teps$ is both analytically subtle and
as computationally expensive as the original problem.  Resolving it with standard finite elements requires a mesh of size $h\ll\eps$
throughout the $d$-dimensional domain $\Omega$, a cost that grows as $O(\eps^{-d})$.
On smooth uniformly convex domains (or domains of finite type) $\teps$ converges as $\eps\to0$ to a limit $\tstar$, which solves the
homogenized equation with boundary data $d(n(x))\,\dnu(x)$, where $u^0$ is the
homogenized solution, $n(x)$ the outward normal, and $d(n)$ the
\emph{boundary layer tail}, a number attached to each unit normal.

For sides of rational normal the limit was established by
Moskow--Vogelius~\cite{MV96} and Allaire--Amar~\cite{AA99}; there it depends on
the sequence $\eps\to0$ through the position of each side relative to the
periodicity cell, so it is not unique, a dependence first observed by Santosa
and Vogelius~\cite{SV93} in their study of first-order eigenvalue corrections.  For smooth domains, G\'{e}rard-Varet
and Masmoudi showed that the tail $d(n)$ is well defined for normals
satisfying a Diophantine condition~\cite{GVM11} and that the whole family
$\teps$ converges on smooth uniformly convex domains~\cite{GVM12}, and
Prange~\cite{Prange13} showed that $d(n)$ exists for every irrational $n$; see
Section~\ref{sec:setup}.  On smooth uniformly convex domains, and more generally on domains of
finite type~\cite{Zhuge18}, the limit is therefore unique.  To our knowledge, a characterization of $d(n)$
on the period cell alone (rather than on the cell times a half-line), together with a practical way to compute it
directly at irrational normals, has remained open.  The limit comes from an interaction between
the oscillating coefficient and the oscillating boundary data in a boundary
layer of width $\eps$, and is not what one obtains by taking the two limits
separately; $d(n)$ has so far been available only through a half-space
problem for each normal.  Once $d(n)$ is known, $\tstar$ is obtained from one
constant coefficient problem.

For rational normals, $d(n)$ was characterized in~\cite{MV96} via a half-strip
analysis, in the form $$d=\int\chin\rho$$ with an explicit density $\rho$ and
$\chin$ the cell corrector in the normal direction, and in~\cite{AA99} through
the boundary layer tail of the half-strip corrector.  For irrational normals
Prange~\cite{Prange13} obtained, by an expansion of the Poisson kernel of the
half-space and an ergodic theorem, a representation of the same form.
(Boundary-layer correctors of the same half-strip type also govern the
effective transmission conditions across thin periodic interfaces, see
Delourme, Haddar and Joly~\cite{DHJ12}.)  In the lifted formulation of the half-space problem by G\'{e}rard-Varet and Masmoudi~\cite{GVM12},  the
unknown is a function of the depth $t$ and of a point $\theta$ of the
period cell $\T^d$.  The density is then given by 
\begin{equation}\label{eq:rho-intro}
  \rho(\cdot,n)=\frac{1}{A_{nn}}\Bigl(a\,(1-\partial_n\chin)+\Nn\chin\Bigr),
  \qquad A_{nn}:=n\cdot\astar n ,
\end{equation}
where $\astar$ is the homogenized matrix and $\Nn$ is the Dirichlet-to-Neumann
(DtN) operator of the lifted problem, which maps Dirichlet boundary data on $\T^d$ to
the conormal derivative of the corresponding boundary layer.  Everything in~\eqref{eq:rho-intro}
is an object on the cell except $\Nn$, whose definition involves the solution
of the half-space problem.

Our main result is that $\Nn$, and therefore the density, satisfies a
nonlinear equation on the period cell $\T^d$ in which the depth variable does not appear.  The equation involves two operators on $\T^d$, the derivative along $n$ and the transverse part of the elliptic operator
$-\nabla\cdot(a\nabla\,\cdot)$:
\begin{equation}\label{eq:GL-intro}
  \G:=n\cdot\nabla_y ,\qquad
  \Lp:=-\nabla^\perp\!\cdot\bigl(a\nabla^\perp\,\cdot\bigr),\qquad
  \nabla^\perp:=\nabla_y-n\,\partial_n .
\end{equation}
Theorem~\ref{thm:riccati} states that $\Nn$ is a symmetric, non-negative solution on $\T^d$ of
\begin{equation}\label{eq:CR-intro}
  \Nn\,a^{-1}\Nn \;+\; [\,\G,\Nn\,] \;=\; \Lp ,
  \qquad [\G,\Nn]:=\G\Nn-\Nn\G ,
\end{equation}
and Theorem~\ref{thm:density} that the density is the solution of the linear
problem
\begin{equation}\label{eq:kernel-intro}
  \Nn\Bigl(\frac{\rho}{a}\Bigr)+\partial_n\rho=0 ,\qquad \int_{\T^d}\rho=1 ,
\end{equation}
which says that $\rho$ is the density conserved by the depth evolution of the
boundary layer, and characterizes it without reference to the cell corrector.

Equation~\eqref{eq:CR-intro} is a Riccati equation in which the usual
derivative in the depth variable has been replaced by a commutator with
$\G=n\cdot\nabla_y$.
We also consider the approximation obtained by dropping the commutator,
\begin{equation}\label{eq:sqrt-intro}
  \Nn\;\approx\;a\sqrt{a^{-1}\Lp}\,,
\end{equation}
which is the operator square root that solves $Na^{-1}N=\Lp$.  This is equal to the DtN operator
for the problem where the coefficient is frozen along each normal line at its boundary
value.  For the Fourier-truncated problem at irrational normals, we show
that the resulting approximation reproduces the leading,
second-order term of $d(n)$ in the contrast of $a$, with a third-order error,
although the operators themselves differ at first order, and we test it
numerically.  

The Riccati structure also gives a numerical method.  Doubling is a
standard algorithm for algebraic Riccati
equations~\cite{Anderson78,CFLW04,BIM12}; it computes the solution as the
limit of finite depth problems whose depth doubles at each step, so that a few
solves reach a depth far beyond the decay length of the boundary layer.
Applied to the cell problem it computes $d(n)$ for irrational $n$ directly, with no phase averaging over rational approximants.  We validate the method against direct strip solves for two example coefficients, a smooth trigonometric one and a high-contrast
inclusion.  As a result, on such domains $\tstar$ can be computed by tabulating $d(n)$ over the circle of normals and solving one
constant coefficient problem.  The solver is available as an open-source
Python package, \texttt{bltail}~\cite{bltail}: given a periodic coefficient
$a$ in two dimensions and a list of normals, it returns $d(n)$.

With $d(n)$ in hand, we approximate the corrector at finite $\eps$ by
\begin{equation}\label{eq:combo-intro}
  \teps\;\approx\;\combo ,
\end{equation}
where both components solve problems with
the \emph{constant} homogenized operator $-\nabla\cdot(\astar\nabla)$ rather
than the full oscillatory operator $L_\eps$.  We define the tilde corrector $\ttilde$
to have the same oscillatory boundary data as $\teps$, but with the homogenized
interior coefficient, and the homogenized corrector $\tstar$ has the 
boundary data $d(n(x))\,\dnu(x)$ described above.  Neither component requires a fine interior
mesh.
The idea of keeping the microstructure only in the boundary data of a
problem with homogenized coefficients was used in~\cite{CMP20} (see
also~\cite{CGMP19}) for the Helmholtz equation with a periodic refractive
index, where the oscillation sits in a lower order term rather than in the principal part. In that case the boundary data of the limiting
boundary corrector is just the weak limit of its boundary data, and requires no
half-space problem. It vanishes on smooth domains and equals the average of
the restricted data on flat sides.  The same device was used for random media by Garnier, Giovangigli, Goepfert and
Millien~\cite{GGGM23}, who introduce a boundary layer corrector of this type
to compensate for the loss of stationarity at the boundary of a random
scatterer.  For oscillations in the principal part, as here, $\tstar$ is nontrivial
even on smooth domains, and its data $d(n)$ come from the boundary layer.
Note that the combination has the correct $L^2(\Omega)$ limit,
$\combo\to\tstar$ as $\eps\to 0$, on smooth uniformly convex domains (or
domains of finite type), since there $\tstar$ is the unique limit of $\teps$.  In numerical experiments on the unit disk the combination is considerably
more accurate than $\tstar$ alone (Section~\ref{sec:disk}).

The paper is organized as follows.
In Section~\ref{sec:setup} we set up the two-scale expansion and recall what is
known about the limit of $\teps$, including the half-strip description of
$d(n)$ for rational normals, and give a variational characterization of the
DtN operator (Lemma~\ref{lem:energy}).  In Section~\ref{sec:cell} we derive the cell problem
for $\Nn$ and the density, and the square root  approximation.
Section~\ref{sec:computing} contains the Riccati doubling algorithm and validation against
direct strip solves for two coefficients.  Sections~\ref{sec:setup}
to~\ref{sec:computing} contain the main results.  Section~\ref{sec:approx} is
an application: we use $d(n)$ to approximate $\teps$ at finite $\eps$ by
$\combo$, and Section~\ref{sec:disk} tests the approximation on the unit disk
for both coefficients.
Section~\ref{sec:conclusion} contains conclusions and future work. 

\section{Setup and known results}
\label{sec:setup}

\subsection{The two-scale expansion and the boundary corrector}

Let $\Omega\subset\R^d$ be a smooth bounded domain.  Consider the Dirichlet problem
\begin{equation}\label{eq:ueps}
  L_\eps u^\eps
  := -\nabla\cdot\bigl(\aeps\,\nabla u^\eps\bigr) = f
  \quad\text{in }\Omega,
  \qquad
  u^\eps = 0 \quad\text{on }\bOmega,
\end{equation}
where the coefficient $a(y)$ is a smooth, $\mathbb{Z}^d$-periodic
\emph{scalar} function with $a\ge\lambda>0$.
The standard two-scale ansatz~\cite{BLP78} gives the formal expansion
\begin{equation}\label{eq:expansion}
  u^\eps(x) \;\approx\;
  u^0(x)
  - \eps\,\chi^j(x/\eps)\,\partial_j u^0(x)
  + \eps\,\teps(x)
  + O(\eps^2),
\end{equation}
where repeated indices are summed, $u^0$ solves the homogenized problem
\begin{equation}\label{eq:u0}
  -\nabla\cdot(\astar\nabla u^0) = f \;\text{ in }\Omega,
  \quad u^0 = 0\;\text{ on }\bOmega,
\end{equation}
$\astar$ is the homogenized coefficient matrix with entries
$a^*_{ij} = \int_{\T^d} a(y)\bigl(\delta_{ij} - \partial_{y_i}\chi^j(y)\bigr)\dv y$,
and $\chi^j$ are the standard cell correctors solving
\begin{equation}\label{eq:cell}
  -\nabla_y\cdot\bigl(a(y)(e_j - \nabla_y\chi^j)\bigr) = 0
  \quad\text{on }\T^d, \qquad \int_{\T^d}\chi^j\dv y = 0.
\end{equation}

The \emph{boundary corrector} $\teps$ is introduced to restore the Dirichlet
condition, which is violated by the oscillatory bulk corrector term.  Since
$u^0 = 0$ on $\bOmega$, the tangential gradient of $u^0$ vanishes there, so
$\partial_j u^0\big|_{\bOmega} = n_j\,\dnu$ where $n$ is the outward unit normal
and $\dnu = \nabla u^0\cdot n$.  Writing $\chin = \chi^j n_j$ for the
cell corrector in the normal direction, $\teps$ satisfies
\begin{equation}\label{eq:teps}
  L_\eps\,\teps = 0 \;\text{ in }\Omega,
  \qquad
  \teps = \chin(x/\eps)\,\dnu(x)
  \;\text{ on }\bOmega.
\end{equation}
The boundary data $\chin(x/\eps)\dnu$ is $O(1)$ in $L^\infty$ but, because it
oscillates on the scale $\eps$, $O(\eps^{-1/2})$ in $H^{1/2}(\bOmega)$, so the
energy estimate for~\eqref{eq:teps} gives only
$\|\nabla\teps\|_{L^2(\Omega)} \le C\eps^{-1/2}$.  The gradient is
concentrated in a boundary layer of width $\sim\eps$, in which $\teps$
oscillates with amplitude $O(1)$.  As recalled next, $\teps$ converges in
$L^2(\Omega)$ to a nonzero limit $\tstar$.  If the oscillatory part
$\teps-\tstar$ were $O(1)$ only in a layer of width $\sim\eps$, its
$L^2(\Omega)$ norm would be $O(\eps^{1/2})$.  This is exact for a flat
boundary with rational normal, where the layer decays exponentially
(the upper bound on a rectangle is in~\cite[Lemma~3.4]{AA99}); for
irrational normals the layer can decay slowly into the
interior~\cite{Prange13}, and the proven rates on smooth domains are
weaker~\cite{GVM12,AKMP17}.
Computing $\teps$ directly with standard finite elements requires a mesh
with $h \ll \eps$ throughout $\Omega$
to resolve the oscillatory coefficients $a(x/\eps)$, which is prohibitively
expensive for small $\eps$.

\subsection{The limit of the boundary corrector}

Despite the boundary layer, $\teps$ converges in $L^2(\Omega)$ to a nonzero
limit $\tstar$, the solution of
\begin{equation}\label{eq:tstar}
  -\nabla\cdot(\astar\nabla\tstar) = 0 \;\text{ in }\Omega,
  \qquad \tstar = d(n(x))\,\dnu(x) \;\text{ on }\bOmega,
\end{equation}
where $d(n)$ is the \emph{boundary layer tail} in the direction $n$ and
$n(x)$ is the outward unit normal at $x\in\bOmega$.

The limit $\tstar$ is not what one obtains by taking the limits of the
operator and of the boundary data separately.  Replacing $a(x/\eps)$ by
$\astar$ and the data $\chin(x/\eps)\dnu$ by its weak limit in $L^2(\bOmega)$
would give zero, since $\int_{\T^d}\chin=0$ and the data average out along a
boundary whose normal is irrational almost everywhere.  The nonzero data
$d(n)\dnu$ of $\tstar$ is produced in the boundary layer of width $\eps$, where
the coefficient and the data oscillate on the same scale; the formula for
$d(n)$ recalled below makes this explicit through a density that depends on
$a$.

The existence of $d(n)$
and the convergence $\teps\to\tstar$ were established for rational normals by
Moskow--Vogelius~\cite{MV96}  via the half-strip analysis
recalled below, on polygonal domains all of whose sides have rational normal;
there it was also shown that $\tstar$ gives the first-order correction to the
homogenized solution, in the sense that $(u^\eps-u^0)/\eps\rightharpoonup\tstar$
weakly in $L^2(\Omega)$, and the first-order correction to the homogenized
eigenvalues, whose formula and dependence on the phase had been found by
Santosa and Vogelius~\cite{SV93}; see also Allaire--Amar~\cite{AA99}, who introduced the name
\emph{boundary layer tail} and gave a detailed analysis of the rational-normal
case.  For irrational normals the main results are those of G\'{e}rard-Varet and Masmoudi.  In~\cite{GVM11} they
showed that for normals satisfying a Diophantine (small divisors) condition,
which holds for almost every $n$, the half-space boundary layer converges to a tail $d(n)$ that does not depend on the position of the boundary,  so there is no subsequence
dependence.  In~\cite{GVM12} they used this, together with an approximation
of the boundary by polygons with Diophantine normals, to prove that on a
smooth uniformly convex domain $\Omega$ the whole family $\teps$ converges in
$L^2(\Omega)$ to the single limit $\tstar$ of~\eqref{eq:tstar}, with an
explicit rate; they remark that most of the analysis carries
over when the boundary has no flat piece with rational normal.  Prange~\cite{Prange13} then
showed, by a Poisson-kernel representation and an ergodic argument, that the
tail $d(n)$ exists for every irrational $n$, Diophantine or not.  Quantitative results, with
convergence rates and regularity of the homogenized boundary data, were
obtained by Aleksanyan, Shahgholian and Sj\"{o}lin~\cite{ASS15}, Armstrong,
Kuusi, Mourrat and Prange~\cite{AKMP17}, and Shen and Zhuge~\cite{SZ20}, and
Zhuge~\cite{Zhuge18} extended the convergence to smooth domains of finite
type, which need not be strictly convex.

\subsection{The strip problem for a rational normal}
\label{sec:strip}

Consider a flat side of $\Omega$ with outward normal $n=e_1$, say
$\{x_1=1\}$ with $\Omega$ on the side $x_1<1$.  In the fast variable $y=x/\eps$
the side is the line $y_1=1/\eps$.  Write
\begin{equation}\label{eq:phase}
  \frac1\eps=m+s,\qquad m\in\mathbb N,\quad s\in[0,1);
\end{equation}
by periodicity of $a$ the problem near the side is, up to the integer shift
$m$, a problem in the half-strip $\{y_1<s\}\times\T^1$, and the \emph{phase}
$s$ determines where the side cuts the periodicity cell.  Write $A_{11}=a^*_{11}$
for the $(1,1)$ entry of the homogenized matrix and let $\chi^1$ be the first
cell corrector.  Define the \emph{half-strip corrector} $\psi(\cdot;s)$ as the
$a$-harmonic solution with $\chi^1$ boundary data,
\begin{equation}\label{eq:psi}
\begin{gathered}
  -\nabla_y\cdot(a(y)\nabla_y\psi) = 0
  \;\text{ in }\{y_1<s\}\times\T^1,
  \qquad
  \psi(s,y_2) = \chi^1(s,y_2),\\
  \psi\to d(e_1;s) \;\text{ as }y_1\to-\infty,
\end{gathered}
\end{equation}
with $\psi$ periodic in $y_2$; the limit $d(e_1;s)$ exists and is approached
exponentially fast~\cite{MV96}.  Define the \emph{boundary density}
\begin{equation}\label{eq:rho}
  \rho(y_2;s) \;=\; \frac{1}{A_{11}}\,
  a(s,y_2)\bigl(1 - \partial_{y_1}\chi^1(s,y_2)
    + \partial_{y_1}\psi(s^-,y_2;s)\bigr),
\end{equation}
so that
\begin{equation}\label{eq:d-strip}
  d(e_1;s) = \int_0^1\chi^1(s,y_2)\,\rho(y_2;s)\dv y_2 .
\end{equation}
Separating the cell-data part from the half-space flux gives the split
\begin{equation}\label{eq:d-formula}
\begin{aligned}
  A_{11}\,d(e_1;s)
  \;=\;&
  \underbrace{\int_0^1\!\chi^1(s,y_2)\,
    a(s,y_2)\bigl(1-\partial_{y_1}\chi^1(s,y_2)\bigr)\dv y_2}_{\text{toric term}}\\
  &+\;
  \underbrace{\int_0^1\!\chi^1(s,y_2)\,
    a(s,y_2)\,\partial_{y_1}\psi\big|_{y_1=s^-}\dv y_2,}_{\text{DtN term}}
\end{aligned}
\end{equation}
established in~\cite{MV96} (Proposition~6.3) via a Green's identity argument in
the half-strip; the DtN term is the Dirichlet energy of $\psi$ and is non-negative.
The tail depends on the phase: along a sequence $\eps_k\to0$ the limit of
$\teps$ on the side is determined by $d(e_1;s_k)$ with $s_k=1/\eps_k \bmod 1$,
so it exists along subsequences with convergent phase and depends on the
subsequence~\cite{SV93,MV96}.  For instance $\eps_k=1/k$ gives $s_k=0$ for every
$k$, and the formulas above with $s=0$.

\begin{remark}[Rational vs.\ irrational normals]\label{rem:ratirr}
The phase dependence above is the general situation for a \emph{rational}
normal. The periodic data restricted to the lower dimensional boundary   $\{x/\eps : x\in\bOmega,\ n(x)=n\}$ is periodic on that boundary,
and different sequences $\eps_k\to0$ will see different parts of the original period cell.  For a smooth uniformly convex domain, or more generally a smooth domain
of finite type in the sense of~\cite{Zhuge18}, the boundary has no flat
parts and the set of boundary points with rational normal has surface
measure zero, so the normal is irrational at almost every boundary point.  For irrational
$n$ the restriction of $\{x/\eps\}$ equidistributes over the full torus $\T^d$ as
$\eps\to0$ (Weyl's theorem), the limit is \emph{unique},
and $d(n)$ is the ergodic average
\begin{equation}\label{eq:d-irrational}
  d(n) = \int_{\T^d} \chin(y)\,\rho(y,n)\dv y ,
\end{equation}
with $\rho(\cdot,n)$ determined by the half-space problem in direction
$n$~\cite{GVM12,Prange13}, which we describe in the next section.
The rational normals then do not affect the data of $\tstar$ in
$L^2(\bOmega)$.  Throughout the paper we assume that $\Omega$ is smooth and
uniformly convex or of finite type.  Then the limit $\tstar$ of $\teps$ is
unique and its boundary data is $d(n(x))\dnu$ with $d$ given
by~\eqref{eq:d-irrational}: the convergence $\teps\to\tstar$ is proved
in~\cite{GVM12} for uniformly convex domains, such as the disk used in
Section~\ref{sec:disk}, and in~\cite{Zhuge18} for domains of finite type.
\end{remark}

\subsection{The half-space problem and its lifting}
\label{sec:lifted}

Fix a unit normal $n$ and let $H_n=\{y\cdot n<0\}$ be the interior half-space
with boundary $\partial H_n=\{y\cdot n=0\}$.  Given Dirichlet
data $f$ (a $1$-periodic function on $\R^d$ restricted to $\partial H_n$), the
\emph{half-space problem} asks for the bounded $a$-harmonic function
\begin{equation}\label{eq:halfspace}
  -\nabla_y\cdot\bigl(a(y)\nabla_y\psi\bigr)=0 \;\text{ in }H_n,
  \qquad
  \psi=f \;\text{ on }\partial H_n,
  \qquad
  \psi\to\text{const} \;\text{ as }y\cdot n\to-\infty .
\end{equation}
For $n=e_1$ and $f=\chi^1$ this is the strip problem~\eqref{eq:psi} with
phase $s=0$; the tail $d(n)$ of~\eqref{eq:d-irrational} is the constant
approached at depth when $f=\chin$.  The difficulty for irrational $n$ is that
$a$ is periodic in $y$ but the boundary restriction is not periodic, so there
is no bounded cell on which to pose~\eqref{eq:halfspace}.

G\'{e}rard-Varet and Masmoudi~\cite{GVM11,GVM12} resolve this by adding a
variable.  Write each $y\in H_n$ as
\begin{equation}\label{eq:coords}
  y=\theta-tn,\qquad t:=-y\cdot n\ge0,\quad \theta:=y-(y\cdot n)\,n\in\partial H_n ,
\end{equation}
so $t$ is the distance from $y$ to the boundary hyperplane and $\theta$ is the
orthogonal projection of $y$ onto it, that is, the point of $\partial H_n$
closest to $y$.  They look for $\psi$ of the form
\begin{equation}\label{eq:gvm-def}
  \psi(y)=V(\theta,t),\qquad V \text{ $1$-periodic in }\theta\in\R^d,
\end{equation}
that is, $V$ is a function on the cylinder $\T^d\times[0,\infty)$, and $\psi$
is recovered from it by evaluating $V(\cdot,t)$ only at points $\theta$ of the
boundary hyperplane: the values of $\psi$ on the plane $\{y\cdot n=-t\}$ are
those of $V(\cdot,t)$ on $\partial H_n$, shifted by $-tn$.  Since
$\nabla_y\psi=-n\,\partial_tV+\nabla^\perp V$, where
$\nabla^\perp:=\nabla_\theta-n\,(n\cdot\nabla_\theta)$ is the gradient
in the directions tangent to the hyperplane, $\psi$ is $a$-harmonic
provided
\begin{equation}\label{eq:gvm}
  -\partial_t\bigl(a(\theta-tn)\,\partial_tV\bigr)
  -\nabla^\perp\!\cdot\bigl(a(\theta-tn)\nabla^\perp V\bigr)=0
  \quad\text{on }\T^d\times(0,\infty),
  \qquad V(\theta,0)=f(\theta),
\end{equation}
which is their equation~(2.7).  Problem~\eqref{eq:gvm} is posed on a bounded cell in $\theta$ for every $n$,
but its coefficient $a(\theta-tn)$ depends on the depth $t$.  Problem~\eqref{eq:gvm} is well posed in the energy class for every
$n$~\cite[Prop.~2]{GVM11}, and the convergence of $V(\cdot,t)$ to a constant as
$t\to\infty$ was shown for Diophantine $n$ in~\cite{GVM11}, and for every
irrational $n$, in the half-space form, by Prange~\cite{Prange13}.

We now make a further change of variables so that the coefficient no longer depends on $t$.  Set
\begin{equation}\label{eq:lift-def}
  \Psi(t,\theta):=V(\theta+tn,\,t),
  \qquad\text{equivalently}\qquad
  \psi(y)=\Psi(-y\cdot n,\,y),
\end{equation}
so that the cell variable of $\Psi$ is the point $y$ itself (mod $\mathbb Z^d$)
rather than its projection on the boundary hyperplane.  (We put the depth
variable first in $\Psi$ since $\Psi$ will be viewed as an evolution in $t$,
see Theorem~\ref{thm:riccati}.)  By the chain rule,
\begin{equation}\label{eq:lift-calc}
  \partial_tV=(\partial_t-\G)\Psi,\qquad \nabla^\perp V=\nabla^\perp\Psi,
  \qquad \G:=n\cdot\nabla_\theta ,
\end{equation}
and $a(\theta-tn)$ becomes $a(\theta)$ in the new cell variable.  Substituting
into~\eqref{eq:gvm}, $\psi$ is $a$-harmonic provided
\begin{equation}\label{eq:lifted}
  (\partial_t-\G)\,a(\theta)\,(\partial_t-\G)\,\Psi \;=\; \Lp\,\Psi
  \quad\text{on }(0,\infty)\times\T^d,
  \qquad
  \Lp:=-\nabla^\perp\!\cdot\bigl(a\nabla^\perp\,\cdot\bigr),
\end{equation}
with $\Psi(0,\cdot)=f$.  The two forms~\eqref{eq:gvm} and~\eqref{eq:lifted}
are equivalent; in~\eqref{eq:lifted} the coefficient $a(\theta)$ does not
depend on $t$, and instead the normal derivative appears in the combination
$\partial_t-\G$, with $\G$ the derivative along $n$ in the cell.  This is the
form we work with (its advantage is explained in
Remark~\ref{rem:why}).

For rational $n$ the lines $\{\theta+zn^\perp:\ z\in\R\}$ in the cell close
up after a finite length, and the lifted problem restricted to such a line is
the strip problem~\eqref{eq:psi} for one phase, so for rational $n$
problem~\eqref{eq:lifted} is the collection of the strip problems for all
phases.  For irrational $n$ each such line is dense in the cell, and there is
no strip problem to reduce to.

\subsection{The DtN operator and the boundary density}
\label{sec:dtn}

\begin{definition}
The \emph{DtN operator} $\Nn$ of~\eqref{eq:lifted} maps the boundary data
$f=\Psi(0,\cdot)$ to the conormal flux
\[
  \Nn f \;:=\; -a\,(\partial_t-\G)\Psi_f\big|_{t=0}
  \;=\; n\cdot a\nabla_y\psi_f\big|_{\partial H_n},
\]
where $\Psi_f$ is the bounded solution of~\eqref{eq:lifted} with data $f$ and
$\psi_f(y):=\Psi_f(-y\cdot n,y)$ is the corresponding $a$-harmonic function on
the half-space $H_n$; the second expression says that $\Nn f$ is the usual
conormal derivative of $\psi_f$ on the hyperplane $\partial H_n$, written in the
cell variable.
\end{definition}

Throughout, $\langle f,g\rangle=\int_{\T^d}\bar f\,g\dv\theta$ is the
$L^2(\T^d)$ inner product and $\|f\|=\langle f,f\rangle^{1/2}$; the
functions are real except in the Fourier computations of
Section~\ref{sec:computing}.  With this inner product, Green's identity on
$[0,\infty)\times\T^d$ gives
\begin{equation}\label{eq:energy}
  \langle f,\Nn g\rangle
  = \int_0^\infty\!\!\int_{\T^d}
    a\Bigl[(\partial_t-\G)\Psi_f\,(\partial_t-\G)\Psi_g
      + \nabla^\perp\Psi_f\cdot\nabla^\perp\Psi_g\Bigr]\dv\theta\dv t ,
\end{equation}
so $\Nn$ is symmetric and non-negative on $L^2(\T^d)$, with kernel the
constants when $n$ is irrational (for rational $n$ see
Remark~\ref{rem:fd}).

By the definition of $\astar$, the normal component $a(1-\partial_n\chin)$
of the cell flux has mean
\begin{equation}\label{eq:Ann}
  \int_{\T^d}a\,(1-\partial_n\chin)\dv\theta=A_{nn}=n\cdot\astar n .
\end{equation} Define the \emph{boundary density} for the normal $n$ by
\begin{equation}\label{eq:rho-cell}
  \rho(\cdot,n) \;:=\; \frac{1}{A_{nn}}\Bigl(a\,(1-\partial_n\chin)+\Nn\chin\Bigr)
  \qquad\text{on }\T^d ;
\end{equation}
it is the normal cell flux of~\eqref{eq:Ann} corrected by the conormal
flux of the boundary layer with data $\chin$.  The results of~\cite{GVM12}
and~\cite{Prange13} can now be stated in the form used below.

\begin{theorem}[G\'{e}rard-Varet--Masmoudi, Prange]\label{thm:known}
Let $n$ be irrational.  For all smooth data $f$ the lifted
problem~\eqref{eq:lifted} has a unique solution $\Psi_f$ with
$(\partial_t-\G)\Psi_f$ and $\nabla^\perp\Psi_f$ in $L^2((0,\infty)\times\T^d)$;
it is smooth, and all derivatives of $(\partial_t-\G)\Psi_f$ and
$\nabla^\perp\Psi_f$ are in $L^2((0,\infty)\times\T^d)$ as
well~\cite[Prop.~2]{GVM11}.  The solution converges to a constant at depth: for $n$ satisfying a
Diophantine condition, $\Psi_f(t,\cdot)$ converges on $\T^d$ as $t\to\infty$,
faster than any power of $t$~\cite{GVM11}; for every irrational $n$, the
half-space solution $\psi_f(y)=\Psi_f(-y\cdot n,y)$ converges as
$y\cdot n\to-\infty$, locally uniformly in the tangential variable and
without a rate~\cite[Theorem~1.2]{Prange13}.  The constant
is~\cite{Prange13}
\begin{equation}\label{eq:representer}
   \lim_{y\cdot n\to-\infty}\psi_f(y)=\int_{\T^d}\rho(\theta,n)\,f(\theta)\dv\theta .
\end{equation}
In particular $\int_{\T^d}\rho=1$ and the boundary layer tail is
$d(n)=\int_{\T^d}\rho\,\chin$, i.e.
\begin{equation}\label{eq:d-twoterm}
  A_{nn}\,d(n)=\int_{\T^d}a\,\chin(1-\partial_n\chin)\dv\theta+\langle\chin,\Nn\chin\rangle .
\end{equation}
\end{theorem}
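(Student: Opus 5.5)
The substance of the theorem is taken from \cite{GVM11,Prange13}: well-posedness and regularity of $\Psi_f$, and the existence of the limit constant, which I call $c_f$. What must still be checked is that $c_f$ is represented by the specific density~\eqref{eq:rho-cell}, built from $\Nn$, so that \eqref{eq:representer} holds. The consequences $\int\rho=1$ and~\eqref{eq:d-twoterm} then follow. I would prove the representation by a Green's identity in the lifted variables, pairing $\Psi_f$ with the lifted form of the affine $a$-harmonic function $y\mapsto y\cdot n-\chin(y)$. This is the analogue of the half-strip argument of~\cite[Prop.~6.3]{MV96}.

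\emph{Step 1.} In the lifted variables $y\cdot n=-t$ and the cell variable is $\theta=y$, so this function becomes $U(t,\theta)=-t-\chin(\theta)$. A direct computation gives $(\partial_t-\G)U=-1+\partial_n\chin$ and $\nabla^\perp U=-\nabla^\perp\chin$. The lifted equation~\eqref{eq:lifted} for $U$ is exactly the cell equation~\eqref{eq:cell} in direction $n$. The conormal flux of $U$ on every slice $t=$ const is $-a(\partial_t-\G)U=a(1-\partial_n\chin)$.

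\emph{Step 2.} Apply Green's second identity to $\Psi_f$ and $U$ on $[0,T]\times\T^d$ for the operator $(\partial_t-\G)a(\partial_t-\G)-\Lp$. This uses that $\G$ is skew-adjoint on $\T^d$, so it can be moved across inner products. Both functions solve the equation, so only slice terms survive:
\[
\Bigl[\langle \Psi_f(t),a(1-\partial_n\chin)\rangle-\langle U(t),F_f(t)\rangle\Bigr]_{t=0}^{t=T}=0,
\qquad F_f(t):=-a(\partial_t-\G)\Psi_f(t).
\]
At $t=0$ we have $U(0)=-\chin$ and $F_f(0)=\Nn f$. The bracket there is $\langle f,a(1-\partial_n\chin)\rangle+\langle\chin,\Nn f\rangle$. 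By symmetry of $\Nn$ from~\eqref{eq:energy}, this equals $A_{nn}\int\rho f$.

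\emph{Step 3.} At $t=T$, write $U(T)=-T-\chin$. The mean $\int_{\T^d}F_f(t)\,d\theta$ is independent of $t$: integrate the equation over $\T^d$ and note that the $\G$- and $\Lp$-terms have zero mean. This mean is also in $L^2(0,\infty)$ by Theorem~\ref{thm:known}, so it vanishes, and the term carrying $-T$ drops out. The term $\langle\chin,F_f(T)\rangle$ tends to zero along a sequence $T_k\to\infty$, because $F_f\in L^2((0,\infty)\times\T^d)$ with derivatives. Its $t$-derivative is also integrable, so after averaging over $T\in[T_0,2T_0]$ it tends to zero outright. What remains is
\[
\lim\,\langle\Psi_f(T),a(1-\partial_n\chin)\rangle = c_f A_{nn},
\]
which gives $c_f=\int\rho f$.

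\emph{The main obstacle} is justifying this last limit for general (non-Diophantine) irrational $n$. For Diophantine $n$, \cite{GVM11} gives convergence of $\Psi_f(t,\cdot)$ on $\T^d$ directly. Otherwise, \cite{Prange13} gives only locally uniform convergence of $\psi_f$ in the half-space, without a rate. I would transfer this to the torus using $\Psi_f(t,\theta)=\psi_f(\theta-tn)$ on the plane through $\theta$ and averaging over tangential translates. I expect Prange's convergence to be uniform over bounded tangential windows at depth, since his Poisson-kernel bound is uniform in the boundary position. Since $\nabla^\perp\Psi_f\in L^2$, the slices $\Psi_f(t)$ become close to constants in $L^2(\T^d)$ on average in $t$. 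So it suffices to identify the constant, and the locally uniform limit does this.

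\emph{Step 4.} For the consequences, take $f=1$. Then $\Psi_f\equiv1$, $c_1=1$ and $\Nn1=0$, so $\int\rho=1$. Taking $f=\chin$ gives $d(n)=\int\rho\chin$, which is~\eqref{eq:d-twoterm}.
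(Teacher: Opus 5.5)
For Diophantine $n$ your argument is complete. It is in substance the paper's own independent proof of \eqref{eq:representer} (Theorem~\ref{thm:density}(i)--(ii)): your conserved bracket $\langle\Psi_f(T),a(1-\partial_n\chin)\rangle+\langle\chin,F_f(T)\rangle$ equals $A_{nn}\langle\rho,\Psi_f(T)\rangle$ once one uses $F_f(T)=\Nn\Psi_f(T)$. The paper obtains this conservation law ($K^{T}\rho=0$) from the Riccati equation \eqref{eq:CR}. You obtain it from Green's identity with the lifted affine solution $-t-\chin$, as in \cite{MV96}, and you dispose of the flux term by $H^1$-in-$t$ bounds rather than depth invariance. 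That route is slightly more elementary but otherwise equivalent. For general irrational $n$ the paper gives no proof: it identifies \eqref{eq:rho-cell} with \cite[eq.~(6.4)]{Prange13} and cites it. It claims independence from \cite{Prange13} only when $\Psi_f(t,\cdot)$ converges on $\T^d$.

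The gap is the step you flag, and neither of your two bridges works as stated.

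\emph{Local uniformity does not reach the torus pairing.} Prange's limit is locally uniform for a single half-space solution, even on bounded tangential windows. But by equidistribution, $\langle\Psi_f(T),g\rangle$ is an average of $\psi_f g$ over the \emph{whole} hyperplane $\{y\cdot n=-T\}$, not over a bounded window.

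\emph{$\nabla^\perp\Psi_f\in L^2$ does not make the slices close to constants, even on average in $t$.} For irrational $n$ there is no Poincar\'e inequality for $\nabla^\perp$ on $\T^d$, because $\inf_{k\neq0}\operatorname{dist}(k,\R n)=0$. This is exactly the small-divisor obstruction, and it is also why Prange's convergence carries no rate.

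\emph{A way to close it: use all offsets at once.} Note that $\Psi_f(t,\theta)=V(\theta+tn,t)$; the formula $\psi_f(\theta-tn)$ you wrote is the one for $V$. So $V(\xi,t)=\Psi_f(t,\xi-tn)$ is, for each $\xi$, the value at depth $t$, on the normal line through $\xi$, of the $a$-harmonic function in $\{y\cdot n<\xi\cdot n\}$ with data $f$. For irrational $n$ Prange's tail does not depend on the offset (his representation is an integral over $\T^d$). Hence $V(\xi,t)\to c_f$ for every $\xi$. Since $V$ is bounded by the maximum principle, dominated convergence gives
$\|\Psi_f(t)-c_f\|_{L^2(\T^d)}=\|V(\cdot,t)-c_f\|_{L^2(\T^d)}\to0$,
and your Step~3 then finishes the argument. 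This requires one more fact, which you should state explicitly: for every offset, the energy solution of \eqref{eq:lifted} coincides with Prange's bounded solution, which is his Liouville-type uniqueness result.
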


The density~\eqref{eq:rho-cell} is the formula of~\cite[eq.~(6.4)]{Prange13}
read on the cell: Prange writes the tail as means along the boundary
hyperplane, which for irrational $n$ are integrals over $\T^d$, and the
conormal derivative of the boundary layer with data $\chin$ appearing there is
$\Nn\chin$ by definition of the DtN operator.

For rational $n$ the lifted problem contains all phases at
once, and formulas~\eqref{eq:rho-cell} and~\eqref{eq:d-twoterm} produce the average over $s$ of the strip tails
$d(n;s)$ of~\cite{MV96} (Remark~\ref{rem:rational}).  We denote this phase
average by $\bar d(n)$.  It is what the algorithm of
Section~\ref{sec:computing} returns at a rational normal, and what the tables
report there; it is not the limit of $\teps$ along any particular sequence
$\eps_k\to0$.  For irrational $n$, $\bar d(n)=d(n)$.  By a result of Feldman and Zhang~\cite[Theorem~1.2 and the discussion
following Theorem~1.1]{FZ19}, for linear divergence-form equations in any
dimension the tail of a fixed periodic datum is H\"older continuous on the
irrational normals, and its continuous extension to a rational normal is the
average over the phase of the rational-normal tails.  Their data do not
depend on $n$, but $\chin=n_j\chi^j$ and the tail is linear in the boundary data, so
the result applies to each $\chi^j$ and then to $d(n)$.  Thus $\bar d$ is
the continuous extension of $d$ to all normals.

We also state a variational description of $\Nn$, valid for every normal,
which is used in Section~\ref{sec:computing}.  For $0<T\le\infty$ and
$\Psi$ defined on $(0,T)\times\T^d$ let
\begin{equation}\label{eq:ET}
  \mathcal E_T(\Psi):=\int_0^T\!\!\int_{\T^d}
  a\Bigl[\bigl|(\partial_t-\G)\Psi\bigr|^2+\bigl|\nabla^\perp\Psi\bigr|^2\Bigr]\dv\theta\dv t ,
\end{equation}
and let $$X:=\{f\in L^2(\T^d):\ \nabla^\perp f\in L^2(\T^d)\}.$$  By the energy
identity~\eqref{eq:energy}, $\langle f,\Nn f\rangle=\mathcal E_\infty(\Psi_f)$
for the solution $\Psi_f$ of Theorem~\ref{thm:known}.

\begin{lemma}[Minimal energies]\label{lem:energy}
Let $n$ be any unit vector and $f\in X$.  For every $T\in(0,\infty]$ the
problem
\begin{equation}\label{eq:NT}
  \langle f,N_Tf\rangle:=\min\bigl\{\mathcal E_T(\Psi):\ \Psi(0,\cdot)=f\bigr\},
\end{equation}
over $\Psi$ with $(\partial_t-\G)\Psi,\nabla^\perp\Psi\in L^2((0,T)\times\T^d)$,
has a unique minimizer. 
The minimum is a quadratic form in $f$ with
\begin{equation}\label{eq:NT-bound}
  \langle f,N_Tf\rangle\le C\bigl(\|f\|^2+\|\nabla^\perp f\|^2\bigr),
\end{equation} where $C$
depends only on the bounds of $a$.  It is nondecreasing in $T$, and
$$\langle f,N_Tf\rangle\to\langle f,N_\infty f\rangle$$ as $T\to\infty$.  If $n$
is irrational and $f$ is smooth, the minimizer for $T=\infty$ is the
solution $\Psi_f$ of Theorem~\ref{thm:known}, so that $N_\infty=\Nn$ on
$C^\infty(\T^d)$.
\end{lemma}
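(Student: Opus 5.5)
We would prove Lemma~\ref{lem:energy} by a direct-method argument in a Hilbert space built from the energy itself. Fix $T\in(0,\infty]$ and let $\mathcal H_T$ be the space of $\Psi$ on $(0,T)\times\T^d$ with $D\Psi:=(\partial_t-\G)\Psi$ and $\nabla^\perp\Psi$ in $L^2$, with the seminorm $\mathcal E_T^{1/2}$. Since $a\ge\lambda$ and $a$ is bounded, $\mathcal E_T$ is equivalent to $\|D\Psi\|^2+\|\nabla^\perp\Psi\|^2$. The key point is the trace: along the characteristic variables $s\mapsto\Psi(s,\theta+sn)$ we have $\frac{d}{ds}\Psi(s,\theta+sn)=(D\Psi)(s,\theta+sn)$. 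Hence $\Psi(0,\cdot)$ makes sense in $L^2_{\rm loc}$ in $t$, and the affine constraint set $\{\Psi(0,\cdot)=f\}$ is closed. Concretely, we would work with $V(\theta,t)=\Psi(t,\theta-tn)$, in which $D$ becomes $\partial_t$. Because translation is measure preserving on $\T^d$, this turns the problem into the GVM form~\eqref{eq:gvm} with $\partial_t V$ and $\nabla^\perp V$ in $L^2$, where the trace at $t=0$ is standard.

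\textbf{Existence and uniqueness.} The constraint set is nonempty with a cheap competitor: take $V(\theta,t)=\eta(t)f(\theta)$ with a cutoff $\eta$, $\eta(0)=1$, supported in $[0,\min(T,1)]$. Here we must take $\eta$ adapted to $T$ when $T$ is small. Its energy is bounded by $C(\|f\|^2+\|\nabla^\perp f\|^2)$, since $\nabla^\perp$ commutes with the shift $\theta\mapsto\theta+tn$ and that shift preserves $L^2$ norms. This gives~\eqref{eq:NT-bound}, with constant depending on the bounds of $a$ (and on $T$ only through $\min(T,1)$, which is harmless for $T\ge1$; for small $T$ we would use $\eta(t)=(1-t/T)_+$ if needed, or note that monotonicity gives the bound from above only for large $T$ — this is a detail to check).

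A minimizing sequence has bounded energy. Its differences $\Psi_k-\Psi_j$ have zero trace, and strict convexity of the quadratic functional together with the parallelogram law makes the sequence Cauchy in the energy seminorm. The limit lies in the closed affine set, which gives existence. On zero-trace functions the energy is a genuine norm: if $\partial_tV=0$ and $V(\cdot,0)=0$, then $V=0$. Uniqueness then follows from strict convexity.

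\textbf{Quadratic form, monotonicity and limit.} The minimizer depends linearly on $f$: it is the orthogonal projection of the competitor onto the complement of the zero-trace subspace. Hence the minimum is a quadratic form. Monotonicity in $T$ holds because restricting a $T'$-admissible $\Psi$ to $(0,T)$ is $T$-admissible with no larger energy. For the limit, $\langle f,N_Tf\rangle\le\langle f,N_\infty f\rangle$, while the $T=\infty$ minimizer truncated to $(0,T)$ has energy converging to $\langle f,N_\infty f\rangle$. This truncation only shows that the $T$-energy of one fixed function converges, which is the wrong direction. The converse inequality needs an extension argument: given the $T$-minimizer $\Psi_T$, we would extend it beyond $T$ at small cost.

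\textbf{Main obstacle.} This extension is the main obstacle. The plan is to take the $T$-minimizers on $(0,T)$ with the natural (Neumann-type) condition at $t=T$, and use that $\mathcal E_T(\Psi_T)$ is nondecreasing and bounded. Then, on $(0,T_0)$ with $T_0$ fixed, the family $\Psi_T$ as $T\to\infty$ is bounded and converges weakly. Lower semicontinuity, followed by $T_0\to\infty$ and a diagonal argument, produces a limit $\Psi$ on $(0,\infty)$ with trace $f$ and $\mathcal E_\infty(\Psi)\le\lim\langle f,N_Tf\rangle$. This is exactly the missing inequality.

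\textbf{Identification for irrational $n$.} When $n$ is irrational and $f$ is smooth, the solution $\Psi_f$ of Theorem~\ref{thm:known} is admissible. For any zero-trace $\Phi$ in the energy space, the first variation $\int a[D\Psi_f\,D\Phi+\nabla^\perp\Psi_f\cdot\nabla^\perp\Phi]$ vanishes by Green's identity~\eqref{eq:energy} and the equation~\eqref{eq:lifted}. This requires approximating $\Phi$ by compactly supported smooth functions in the energy norm, which holds via the $V$-variables. So $\Psi_f$ is the minimizer, and by~\eqref{eq:energy} its energy is $\langle f,\Nn f\rangle$. Polarization of the two quadratic forms then gives $N_\infty=\Nn$ on $C^\infty(\T^d)$.
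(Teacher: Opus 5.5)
Your argument is correct apart from one slip noted at the end, and its skeleton is the paper's. You use the substitution $V(\theta,t)=\Psi(t,\theta-tn)$, which turns $(\partial_t-\G)$ into $\partial_t$ and gives the trace. You use the cutoff competitor for the upper bound and get monotonicity by restriction. For $\langle f,N_Tf\rangle\to\langle f,N_\infty f\rangle$ you bound the $T$-minimizers on each fixed slab, take weak limits, and use lower semicontinuity and a diagonal argument; this is exactly how the paper resolves the ``main obstacle'' you identified.

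Two steps go differently.
\begin{itemize}
\item \emph{Existence at $T=\infty$.} You obtain the minimizer directly by the parallelogram/projection argument in the homogeneous energy space. This needs completeness of the zero-trace subspace under $\mathcal E_\infty^{1/2}$, which holds because $\|W(t)\|\le\sqrt t\,(\mathcal E_\infty(W)/\lambda)^{1/2}$ on every slab. The paper instead works only in the inhomogeneous spaces $\mathcal H_S$, and obtains the $T=\infty$ minimizer as a by-product of the $T\to\infty$ limit.
\item \emph{Identification with $\Psi_f$.} You test~\eqref{eq:lifted} against compactly supported functions and extend by density and continuity of $B_\infty(\Psi_f,\cdot)$. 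The paper keeps the boundary term $\langle W(T),a(\partial_t-\G)\Psi_f(T)\rangle$ and kills it along a sequence $T_j\to\infty$, using $\|W(T)\|\le C\sqrt T$ and $\int_0^\infty g^2<\infty$.
\end{itemize}

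Your identification is the standard weak-solution argument and needs no pointwise control of the flux $g(T)$. However, the density claim is exactly where the behavior at infinity enters, so it needs one line of justification. For zero-trace $W$ of finite energy, write $\widetilde W(t,\theta)=W(t,\theta-tn)$. Then for $t>s$,
$$\|W(t)\|\le\|W(s)\|+\sqrt t\,\|\partial_t\widetilde W\|_{L^2((s,\infty)\times\T^d)},$$
and the last factor tends to $0$ as $s\to\infty$, so $\|W(t)\|=o(\sqrt t)$. Hence the plain cutoff $\eta(t/R)W$ satisfies
$$\|R^{-1}\eta'(t/R)W\|^2\le CR^{-2}\int_R^{2R}\|W(t)\|^2\dv t\to0 .$$
The crude bound $\|W(t)\|\le C\sqrt t$ alone would give only $O(1)$ here.

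The slip concerns the constant in~\eqref{eq:NT-bound}, which must not depend on $T$; your treatment of small $T$ goes the wrong way. Problem~\eqref{eq:NT} imposes nothing at $t=T$. So one fixed competitor $\eta(t)f$, with $\eta$ supported in $[0,1]$ and restricted to $(0,T)$, serves for every $T$ with the same constant. Equivalently, monotonicity gives $N_T\le N_1$ for $T\le1$: it bounds small $T$ by large $T$, not conversely. The choice $\eta=(1-t/T)_+$ would produce a constant of order $1/T$.
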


\begin{proof}
Write $U(t,\theta):=\Psi(t,\theta-tn)$, the unknown of~\eqref{eq:gvm}.  Then
$\partial_tU$ and $\nabla^\perp U$ are $(\partial_t-\G)\Psi$ and
$\nabla^\perp\Psi$ evaluated at $(t,\theta-tn)$, and since
$\theta\mapsto\theta-tn$ preserves the measure on $\T^d$,
\[
  \mathcal E_T(\Psi)=\int_0^T\!\!\int_{\T^d}a(\theta-tn)
  \bigl[|\partial_tU|^2+|\nabla^\perp U|^2\bigr]\dv\theta\dv t .
\]
Let $a\ge\lambda>0$ and $T<\infty$.  We minimize over the Hilbert space
\[
  \mathcal H_T:=\{U\in H^1(0,T;L^2(\T^d)):\ \nabla^\perp U\in L^2((0,T)\times\T^d)\},
\]
with norm $\|U\|_{\mathcal H_T}^2:=\|U\|^2_{L^2}+\|\partial_tU\|^2_{L^2}+\|\nabla^\perp U\|^2_{L^2}$
(all on $(0,T)\times\T^d$),
restricted to the closed affine set $\{U\in\mathcal H_T:\ U(0)=f\}$, where the
trace $U(0)$ makes sense because $U\in H^1$ in $t$.  On this set
$\mathcal E_T$ is continuous, and we show it is coercive as follows: writing $U(t):=U(t,\cdot)$
as an element of $L^2(\T^d)$,
$$U(t)=f+\int_0^t\partial_sU(s)\dv s,$$ so
$$\|U\|^2_{L^2((0,T)\times\T^d)}\le2T\|f\|^2+T^2\|\partial_tU\|^2_{L^2((0,T)\times\T^d)},$$
and hence we have the coercivity inequality $$\|U\|_{\mathcal H_T}^2\le C_T\bigl(\|f\|^2+\mathcal E_T(U)/\lambda\bigr).$$
It is also strictly convex there, because $\mathcal E_T(W)=0$ with $W(0)=0$
forces $\partial_tW=0$ and hence $W=0$.  So it has a unique minimizer, and
the minimum is a quadratic form in $f$. Choosing the candidate
$U(t,\theta)=\varphi(t)f(\theta)$, with $\varphi$ smooth,
$\varphi(0)=1$ and $\varphi=0$ for $t\ge1$, gives
\[
  \langle f,N_Tf\rangle\le\mathcal E_T(U)
  \le\sup a\int_0^1\bigl(\varphi'(t)^2\|f\|^2+\varphi(t)^2\|\nabla^\perp f\|^2\bigr)\dv t ,
\]
which is the bound~\eqref{eq:NT-bound}, uniformly in $T$.
Next we let $T\to\infty$.  Write $m_\infty$ for the infimum of
$\mathcal E_\infty$ over competitors with $U(0)=f$ (finite, by the candidate
above).  For $T<S\le\infty$ the restriction to $(0,T)$ of a competitor
on $(0,S)$ is a competitor on $(0,T)$ with no larger energy, so
$\langle f,N_Tf\rangle\le\langle f,N_Sf\rangle\le m_\infty$: the minima are
nondecreasing in $T$ and bounded, hence converge to a limit
$q\le m_\infty$.  We show that $q=m_\infty$ and that the infimum is attained.
Let $U_T$ be the minimizer for depth $T$ and fix $S<\infty$.  For $T\ge S$,
the coercivity estimate on $(0,S)$ applied to the restriction of $U_T$ gives
$\|U_T\|^2_{\mathcal H_S}\le C_S\bigl(\|f\|^2+\mathcal E_S(U_T)/\lambda\bigr)
\le C_S\bigl(\|f\|^2+q/\lambda\bigr)$, since
$\mathcal E_S(U_T)\le\mathcal E_T(U_T)=\langle f,N_Tf\rangle\le q$.  So the
$U_T$ are bounded in $\mathcal H_S$ for every $S$, and along a sequence
$T_k\to\infty$ (chosen by a diagonal argument over $S=1,2,\dots$) they converge
weakly in every $\mathcal H_S$ to a function $U_\infty$ on $(0,\infty)\times\T^d$.  The
trace at $t=0$ is a bounded linear map on $\mathcal H_S$, so $U_\infty(0)=f$.
Since $\mathcal E_S$ is convex and continuous on $\mathcal H_S$, it is weakly
lower semicontinuous, and
\[
  \mathcal E_S(U_\infty)\le\liminf_k\mathcal E_S(U_{T_k})\le\liminf_k\mathcal E_{T_k}(U_{T_k})=q
  \qquad\text{for every }S .
\]
Letting $S\to\infty$ gives $\mathcal E_\infty(U_\infty)\le q\le m_\infty$,
while $\mathcal E_\infty(U_\infty)\ge m_\infty$ because $U_\infty$ is a
competitor.  So $U_\infty$ attains the infimum, $q=m_\infty=\langle f,N_\infty f\rangle$,
and the minimizer for $T=\infty$ is unique by the same strict convexity
argument as before ($\mathcal E_\infty(W)=0$ and $W(0)=0$ force $W=0$).

Finally let $n$ be irrational and $f$ smooth.  The solution $\Psi_f$ of
Theorem~\ref{thm:known} has finite energy, so it is a competitor for
$T=\infty$, and we show that it is the minimizer.  Every competitor is
$\Psi_f+W$ with $W(0)=0$ and $\mathcal E_\infty(W)<\infty$, and expanding
the square,
\[
  \mathcal E_\infty(\Psi_f+W)=\mathcal E_\infty(\Psi_f)+2B_\infty(\Psi_f,W)+\mathcal E_\infty(W),
\]
where $B_T$ denotes the bilinear form of $\mathcal E_T$.  Since
$\mathcal E_\infty(W)\ge0$, it suffices to show that $B_\infty(\Psi_f,W)=0$.
Multiply~\eqref{eq:lifted} by $W$ and integrate over $(0,T)\times\T^d$.
Integrating by parts in $t$, with $W(0)=0$, and using that $\G$ is
antisymmetric on $L^2(\T^d)$, the left side becomes
\[
  \bigl\langle W(T),\,a(\partial_t-\G)\Psi_f(T)\bigr\rangle
  -\int_0^T\!\bigl\langle a(\partial_t-\G)\Psi_f,(\partial_t-\G)W\bigr\rangle\dv t ,
\]
and the right side becomes $\int_0^T\langle a\nabla^\perp\Psi_f,\nabla^\perp W\rangle\dv t$
after integrating by parts in $\theta$.  Rearranging (first for smooth $W$,
then by density),
\begin{equation}\label{eq:BT}
  B_T(\Psi_f,W)=\bigl\langle W(T),\,a(\partial_t-\G)\Psi_f(T)\bigr\rangle .
\end{equation}
We now let $T\to\infty$ in~\eqref{eq:BT}.  The left side tends to
$B_\infty(\Psi_f,W)$, since both $\Psi_f$ and $W$ have finite energy on
$(0,\infty)$.  For the right side, set
$g(t):=\|a(\partial_t-\G)\Psi_f(t)\|$, so that by Cauchy--Schwarz
\begin{equation}\label{eq:BT-bound}
  |B_T(\Psi_f,W)|\le\|W(T)\|\,g(T).
\end{equation}
Both factors are controlled by the energy.  First, $g$ is square
integrable on $(0,\infty)$, since
$\int_0^\infty g^2\dv t\le(\sup a)\,\mathcal E_\infty(\Psi_f)<\infty$.
Second, writing $\widetilde W(t,\theta):=W(t,\theta-tn)$ as at the beginning
of the proof, $\widetilde W(0)=0$ and $\partial_t\widetilde W$ is
$(\partial_t-\G)W$ evaluated at $(t,\theta-tn)$, so
\[
\begin{aligned}
  \|W(T)\|=\|\widetilde W(T)\|
  &\le\int_0^T\|\partial_t\widetilde W(t)\|\dv t
  \le\sqrt T\,\|\partial_t\widetilde W\|_{L^2((0,T)\times\T^d)}\\
  &\le\sqrt T\,(\mathcal E_\infty(W)/\lambda)^{1/2}.
\end{aligned}
\]
Inserting both in~\eqref{eq:BT-bound},
\[
  |B_T(\Psi_f,W)|\le(\mathcal E_\infty(W)/\lambda)^{1/2}\,\sqrt T\,g(T).
\]
It remains to see that $\sqrt T\,g(T)$ is small for some arbitrarily large $T$.
If $T\,g(T)^2\ge c>0$ for all $T\ge T_0$, then $g(t)^2\ge c/t$ on
$(T_0,\infty)$, which is not integrable, a contradiction.  So there is a
sequence $T_j\to\infty$ with $\sqrt{T_j}\,g(T_j)\to0$, and along it the
right side of~\eqref{eq:BT} tends to zero.
Thus $B_\infty(\Psi_f,W)=0$, $\Psi_f$ is the minimizer for $T=\infty$, and
$\langle f,N_\infty f\rangle=\mathcal E_\infty(\Psi_f)=\langle f,\Nn f\rangle$
by~\eqref{eq:energy}.
\end{proof}

Lemma~\ref{lem:energy} defines a nonnegative DtN quadratic form $N_\infty$ on
the cell for every normal, rational or irrational, without any condition on
$n$, and at irrational normals it agrees with $\Nn$; the finite
depth problems~\eqref{eq:NT} approximate it from below.  What it does not give
is the smoothness of $\Psi_f$ and its convergence to a constant at depth, which
are used in Theorems~\ref{thm:riccati} and~\ref{thm:density}; for those we rely
on~\cite{GVM11,GVM12,Prange13}.  The proof applies verbatim to the
Fourier-truncated problem of Section~\ref{sec:marching}.

As it stands the formula~\eqref{eq:rho-cell} still requires $\Nn$, i.e.\ the
half-space problem; the next section shows that $\Nn$ satisfies an
equation on $\T^d$ alone.

\section{The cell problem for the DtN operator and the density}
\label{sec:cell}

Everything in Theorem~\ref{thm:known} is expressed through the DtN operator
$\Nn$ of the lifted problem, and computing $\Nn$ has so far meant solving the
lifted problem on the half-cylinder $(0,\infty)\times\T^d$.  This section shows
that $\Nn$ satisfies an algebraic equation on $\T^d$ alone, and that
the density has an intrinsic characterization on $\T^d$ as well.

We take the well-posedness of the lifted problem (Theorem~\ref{thm:known}) for
granted and work at the level of the resulting operators.  Throughout,
$\G$, $\Lp$ and $\Nn$ are regarded as operators acting on $C^\infty(\T^d)$: for
smooth periodic data $f$ the solution $\Psi_f$ is smooth up to $t=0$
(Theorem~\ref{thm:known}), so
$\Nn f$ is a smooth periodic function, and all
compositions and commutators below, such as $\Nn a^{-1}\Nn$ and
$\G\Nn-\Nn\G$, are well defined on $C^\infty(\T^d)$.
For irrational $n$ we do not
identify the function spaces on which $\Nn$ and the commutator extend as
bounded maps. These are nontrivial because of the small divisor phenomenon (modes $k$ with
$k\cdot n^\perp$ close to zero decay slowly into the half-space), and we do not address this question here.

\subsection{The commutator-Riccati equation}

\begin{theorem}[Cell problem for the DtN operator]\label{thm:riccati}
Let $\G=n\cdot\nabla_\theta$ be the derivative along $n$ and
$\Lp=-\nabla^\perp\!\cdot(a\nabla^\perp\,\cdot)$ the tangential part of the
operator, with $\nabla^\perp=\nabla_\theta-n\,\G$.  As operators on
$C^\infty(\T^d)$, the DtN operator $\Nn$ satisfies the algebraic equation
\begin{equation}\label{eq:CR}
  \Nn\,a^{-1}\Nn \;+\; [\,\G,\Nn\,] \;=\; \Lp,
  \qquad [\G,\Nn]:=\G\Nn-\Nn\G .
\end{equation}
Moreover the lifted solution satisfies the \emph{first-order} equation in the
depth variable
\begin{equation}\label{eq:semigroup}
  \partial_t\Psi_f=-K\,\Psi_f,\qquad K:=a^{-1}\Nn-\G ,\qquad \Psi_f(0,\cdot)=f,
\end{equation}
i.e.\ $\Psi_f(t,\cdot)=e^{-tK}f$: once $\Nn$ is known, the boundary layer is
obtained by solving a linear ODE in $t$ whose unknown is a function on the cell.
\end{theorem}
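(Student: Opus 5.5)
The plan is to prove the first-order equation~\eqref{eq:semigroup} first and then obtain~\eqref{eq:CR} from it by differentiating in $t$.

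\textbf{Step 1: the shifted problem is the same problem.} The key observation is that the lifted equation~\eqref{eq:lifted} has coefficients independent of $t$. So for fixed $s\ge0$ the function $\Phi(t,\cdot):=\Psi_f(t+s,\cdot)$ again solves~\eqref{eq:lifted} on $(0,\infty)\times\T^d$. Its data is $\Phi(0)=\Psi_f(s)$, which is smooth by Theorem~\ref{thm:known}, and its energy is the tail of the energy of $\Psi_f$, hence finite. By the uniqueness in Theorem~\ref{thm:known} (or the uniqueness of minimizers in Lemma~\ref{lem:energy}), $\Phi=\Psi_{\Psi_f(s)}$.

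\textbf{Step 2: the first-order equation.} Apply the definition of $\Nn$ to the data $\Psi_f(s)$. This gives
\[
\Nn\Psi_f(s)=-a(\partial_t-\G)\Psi_f\big|_{t=s}.
\]
Rearranging yields $\partial_t\Psi_f=\G\Psi_f-a^{-1}\Nn\Psi_f=-K\Psi_f$ for every $t\ge0$, which is~\eqref{eq:semigroup}. I would interpret $\Psi_f(t)=e^{-tK}f$ only as notation for the solution map $f\mapsto\Psi_f(t)$. Step~1 shows this map is a semigroup on smooth functions, so no abstract generator theory is needed.

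\textbf{Step 3: the Riccati equation.} Write $u=(\partial_t-\G)\Psi_f=-a^{-1}\Nn\Psi_f$, which holds for all $t$. Then~\eqref{eq:lifted} reads $(\partial_t-\G)(au)=\Lp\Psi_f$, that is,
\[
-(\partial_t-\G)\Nn\Psi_f=\Lp\Psi_f .
\]
Since $\Nn$ does not depend on $t$, we have $\partial_t\Nn\Psi_f=\Nn\partial_t\Psi_f=\Nn(\G-a^{-1}\Nn)\Psi_f$. The equation therefore becomes
\[
-\Nn\G\Psi_f+\Nn a^{-1}\Nn\Psi_f+\G\Nn\Psi_f=\Lp\Psi_f ,
\]
that is, $(\Nn a^{-1}\Nn+[\G,\Nn]-\Lp)\Psi_f(t)=0$ for all $t\ge0$. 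Evaluating at $t=0$ gives the operator identity applied to $f$. Since $f\in C^\infty(\T^d)$ is arbitrary, \eqref{eq:CR} follows.

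\textbf{Main obstacle.} The delicate point is justifying the manipulations at $t=0$. We need $\partial_t$ to commute with $\Nn$ along the trajectory, and the equation~\eqref{eq:lifted} to hold up to $t=0$. That in turn requires $\Psi_f(t)$ to be smooth in $\theta$ uniformly near $t=0$ and differentiable in $t$ as a $C^\infty(\T^d)$-valued map, so that $\Nn\Psi_f(t)$ can be differentiated.

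I would avoid computing $\partial_t(\Nn\Psi_f)$ directly. Instead, use the identity from Step~2, $\Nn\Psi_f(t)=-a(\partial_t-\G)\Psi_f(t)$. The right-hand side is a derivative of the smooth solution $\Psi_f$, which is smooth up to $t=0$ with all derivatives in $L^2$ by Theorem~\ref{thm:known}. So $\partial_t$ of the left-hand side exists classically. It also equals $\Nn\partial_t\Psi_f$, by the same shift argument applied to the data $\partial_t\Psi_f(s)$, which is itself the trace of the finite-energy solution $\partial_t\Psi_f$ by translation invariance and linearity.

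With that in place, Step~3 is pure algebra.
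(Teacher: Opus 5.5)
Your proposal is correct and follows the paper's proof. Depth-translation invariance of the lifted problem gives $-a(\partial_t-\G)\Psi_f(t,\cdot)=\Nn\Psi_f(t,\cdot)$ for all $t\ge0$, which is \eqref{eq:semigroup}, and substituting this into \eqref{eq:lifted} yields \eqref{eq:CR} applied to $\Psi_f(t,\cdot)$, hence to $f$. Your extra justification of $\partial_t(\Nn\Psi_f)=\Nn\,\partial_t\Psi_f$ is also sound: you observe that $\partial_t\Psi_f$ is itself the finite-energy solution with data $\partial_t\Psi_f(0,\cdot)$, which makes explicit what the paper compresses into ``$\Nn$ does not depend on $t$''.
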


\begin{proof}
The problem below depth $s$, with data $\Psi_f(s,\cdot)$, has the same
coefficient $a(\theta)$ as the problem below depth $0$ and its solution is
$\Psi_f(s+\cdot,\cdot)$.  Hence the DtN operator at every depth is the same
operator $\Nn$, i.e.
\begin{equation}\label{eq:same-depth}
  -a\,(\partial_t-\G)\Psi_f(t,\cdot)=\Nn\Psi_f(t,\cdot)\qquad\text{for all }t\ge0,
\end{equation}
which is~\eqref{eq:semigroup} after dividing by $a$.  Substituting into~\eqref{eq:lifted} and using that
$\Nn$ does not depend on $t$,
\[
\begin{aligned}
  \Lp\Psi_f=(\partial_t-\G)\bigl(-\Nn\Psi_f\bigr)
  &= -\Nn(\partial_t-\G)\Psi_f + [\G,\Nn]\Psi_f\\
  &= \Nn a^{-1}\Nn\Psi_f+[\G,\Nn]\Psi_f .
\end{aligned}
\]
Since the data $f$ are arbitrary, \eqref{eq:CR} follows.
\end{proof}

\begin{remark}[Structure of~\eqref{eq:CR}]\label{rem:why}
The proof uses that in the form~\eqref{eq:lifted} the coefficient $a(\theta)$
does not depend on $t$, so the DtN operator is the same at every depth and the
equation is algebraic.  In the form~\eqref{eq:gvm} the coefficient moves with
$t$, and the DtN operator $M(t)$ of the problem below depth $t$ satisfies the
usual Riccati differential equation.  Since that problem is the problem below
depth $0$ with the cell shifted by $tn$, $M(t)$ is $\Nn$ conjugated by the
shift $\tau_tf(\theta):=f(\theta-tn)$, and
\[
  \dot M=M\,a(\cdot-tn)^{-1}M-L_\perp(t),\qquad
  M(t)=\tau_t\Nn\tau_t^{-1},\qquad
  \dot M=-[\G,M] .
\]
Conjugating back gives~\eqref{eq:CR}: the derivative in depth has become the
commutator with $\G$.  Without it the equation would be $\Nn a^{-1}\Nn=\Lp$,
whose solution is an operator square root (Section~\ref{sec:sqrt}).  For
coefficients periodic in depth (rational $n$), stationary Riccati equations of
this kind underlie the transparent boundary conditions of Joly, Li and
Fliss~\cite{JLF06,FJ09}; \eqref{eq:CR} extends this to quasi-periodic depth
dependence.

Multiplying~\eqref{eq:CR} by $-1$ and using $\G^{T}=-\G$ puts it in the
standard form of control theory~\cite{BIM12},
\begin{equation}\label{eq:CARE}
  \mathcal A^{T}X+X\mathcal A-X\mathcal BX+\mathcal C=0,
  \qquad X:=\Nn,\quad \mathcal A:=\G,\quad \mathcal B:=a^{-1},\quad \mathcal C:=\Lp ,
\end{equation}
with $\mathcal B$ and $\mathcal C$ symmetric and non-negative.  Of its many solutions the relevant one
is the stabilizing solution, for which $e^{t(\mathcal A-\mathcal BX)}=e^{-tK}$
does not grow: the boundary layer does not grow into the half-space (in a weak
sense, because of the constant mode and, for rational $n$, the neutral modes
of Remark~\ref{rem:fd}).  Decay in depth plays the role of decay in time, so
numerical methods for such equations apply directly to~\eqref{eq:CR}
(Section~\ref{sec:marching}); they rely on the same depth invariance.
\end{remark}

\begin{remark}[Finite dimensions]\label{rem:fd}
On a finite-dimensional space of trigonometric polynomials, the setting of
Section~\ref{sec:computing}, the truncated lifted equation is a linear system
of ordinary differential equations in $t$ with constant coefficients, and
$\Nn$ is the map from boundary values to fluxes on its bounded solutions.
For rational $n$ these include, besides the constant, the \emph{neutral}
modes $e^{2\pi ik\cdot(\theta+tn)}$ with $k\cdot n^\perp=0$: functions constant
along each line $\{\theta+zn^\perp\}$, on which the flux vanishes (the
phase-dependent tails of Remark~\ref{rem:rational}).  We do not address the
extent to which~\eqref{eq:CR} together with $\Nn\ge0$ determines $\Nn$; for
irrational $n$ in infinite dimensions the decay rates of the boundary layer
accumulate at $0$ (the small divisors).
\end{remark}

\subsection{An equation for the density}

The density~\eqref{eq:rho-cell} depends on $\chin$ and $\Nn$, both of
which live on the cell, so with Theorem~\ref{thm:riccati} it is
given by quantities on $\T^d$.  We now show that it is also characterized
on the cell without reference to $\chin$, as a conserved quantity of the 
equation~\eqref{eq:semigroup}.  Part~(ii) is the cell form of the
conservation law used in~\cite{MV96} to derive the strip
formula~\eqref{eq:d-formula}; here it follows from Theorem~\ref{thm:riccati}
and the cell equation by a direct computation, and the far-field formula~(i) follows quickly.  In~\eqref{eq:rho-cell}
the density is defined through $\chin$; parts~(ii) and~(iii) characterize it
in terms of the depth evolution alone.

\begin{theorem}[The density as a conserved quantity]\label{thm:density}
Let $\rho(\cdot,n)$ be given by \eqref{eq:rho-cell}.
\begin{enumerate}
\item[(i)] $\int_{\T^d}\rho=1$, and for all boundary data $f$ for which
  $\Psi_f(t,\cdot)$ converges in $L^2(\T^d)$ to a constant as $t\to\infty$, the limit is
  $\int_{\T^d}\rho f$, i.e.\ \eqref{eq:representer} holds.
\item[(ii)] $\rho$ satisfies the linear equation
  \begin{equation}\label{eq:kernel}
    \Nn\!\left(\frac{\rho}{a}\right)+\partial_n\rho=0 ,
  \end{equation}
  that is, $K^{T}\rho=0$ with $K$ the operator of \eqref{eq:semigroup}.
  In other words, $\langle\rho,\Psi_f(t,\cdot)\rangle$ is a conserved quantity
  of the depth equation~\eqref{eq:semigroup}: it does not depend on $t$ for any
  solution.
\item[(iii)] If $n$ is such that $\Psi_f(t,\cdot)$ converges in $L^2(\T^d)$
  to a constant as $t\to\infty$ for all data $f$ (in particular every
  Diophantine $n$, by~\cite{GVM11}), then
  $\rho\ge0$, so that $\rho$ is a probability density on the cell, and $\rho$
  is the unique solution of~\eqref{eq:kernel} with $\int_{\T^d}\rho=1$.
\end{enumerate}
\end{theorem}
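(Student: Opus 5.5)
Part~(ii) comes first, since (i) and (iii) follow from it.

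\textbf{Adjoint of $K$.} Since $\Nn$ is symmetric and $\G^T=-\G$,
\[
K^T=(a^{-1}\Nn-\G)^T=\Nn a^{-1}+\G .
\]
Hence $K^T\rho=\Nn(\rho/a)+\partial_n\rho$, so the two forms of \eqref{eq:kernel} agree.

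\textbf{Proof of (ii).} Write $A_{nn}\rho=a(1-\partial_n\chin)+\Nn\chin$, and compute $\Nn(\rho/a)$ term by term.
\begin{itemize}
\item The first term gives $\Nn(1-\G\chin)=-\Nn\G\chin$, because constants lie in the kernel of $\Nn$.
\item The second term is $\Nn a^{-1}\Nn\chin$. By Theorem~\ref{thm:riccati} it equals $\Lp\chin-\G\Nn\chin+\Nn\G\chin$.
\end{itemize}
Summing, the $\Nn\G\chin$ terms cancel:
\[
A_{nn}\Nn(\rho/a)=\Lp\chin-\G\Nn\chin .
\]
Meanwhile
\[
A_{nn}\G\rho=\G\bigl(a(1-\G\chin)\bigr)+\G\Nn\chin .
\]
Adding, it remains to show $\Lp\chin+\G\bigl(a(1-\G\chin)\bigr)=0$. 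This is exactly the cell equation~\eqref{eq:cell} contracted with $n$, namely $\nabla\cdot\bigl(a(n-\nabla\chin)\bigr)=0$, split as $\nabla=n\G+\nabla^\perp$. The tangential part gives $\nabla^\perp\cdot(a\nabla^\perp\chin)=-\Lp\chin$, using $\nabla^\perp\cdot n=0$. The normal part gives $\G(a(1-\G\chin))$.

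Conservation then follows from \eqref{eq:semigroup}:
\[
\frac{d}{dt}\langle\rho,\Psi_f\rangle=-\langle\rho,K\Psi_f\rangle=-\langle K^T\rho,\Psi_f\rangle=0 .
\]
Differentiating in $t$ is justified by the smoothness and decay in Theorem~\ref{thm:known}.

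\textbf{Proof of (i).} For the mass, integrate \eqref{eq:rho-cell}. By \eqref{eq:Ann} the first term has integral $A_{nn}$. The term $\int\Nn\chin=\langle 1,\Nn\chin\rangle=\langle\Nn 1,\chin\rangle$ vanishes. So $\int\rho=1$.

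For the far-field formula, if $\Psi_f(t)\to c$ in $L^2$, then conservation gives
\[
\langle\rho,f\rangle=\langle\rho,\Psi_f(t)\rangle\to c\int\rho=c .
\]
This requires $\rho\in L^2$. That holds because $\Nn\chin$ is smooth, by Theorem~\ref{thm:known}.

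\textbf{Proof of (iii).} Let $\sigma$ be another solution of $K^T\sigma=0$ with $\int\sigma=1$, assumed smooth (or at least $L^2$). It is also conserved, so for every $f$,
\[
\langle\sigma,f\rangle=\lim_{t\to\infty}\langle\sigma,\Psi_f(t)\rangle=c_f=\langle\rho,f\rangle .
\]
Hence $\sigma=\rho$.

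For positivity, take $f\ge0$. Then $\psi_f$ is a bounded $a$-harmonic function on the half-space with nonnegative data, and the maximum principle in the half-space gives $\psi_f\ge0$. The limit constant is therefore $\ge0$, i.e.\ $\langle\rho,f\rangle\ge0$ for all smooth $f\ge0$, so $\rho\ge0$.

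\textbf{Main obstacle.} The algebra in (ii) is routine given Theorem~\ref{thm:riccati}. The delicate points are:
\begin{itemize}
\item the operator manipulations $\Nn 1=0$ and the use of symmetry of $\Nn$ on $C^\infty$;
\item the maximum principle for bounded (not decaying) solutions on an unbounded domain.
\end{itemize}
For the maximum principle I would use a Phragmén–Lindelöf-type argument. Alternatively, I would invoke the Poisson-kernel positivity from \cite{Prange13}, or a comparison with $f_+$ in the energy class of Lemma~\ref{lem:energy} via truncation $\min(\Psi,0)$, which does not increase $\mathcal E_\infty$.
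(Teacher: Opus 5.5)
Your proposal is correct and follows the paper's proof essentially step for step. It uses the same application of \eqref{eq:CR} together with $\Nn 1=0$ to reduce $K^{T}(A_{nn}\rho)$ to the cell equation contracted with $n$, the same mass computation and conservation argument for (i), and the same maximum-principle positivity and conserved-functional uniqueness for (iii); your remarks on needing $\rho\in L^2$ and on justifying the maximum principle by truncation in the energy class of Lemma~\ref{lem:energy} only fill in details the paper leaves implicit.
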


\begin{proof}
(ii) Apply $\Nn a^{-1}+\G$ to $A_{nn}\rho$.  Using~\eqref{eq:CR} in the form
$\Nn a^{-1}\Nn=\Lp-[\G,\Nn]$,
\begin{align*}
 (\Nn a^{-1}+\G)\bigl[a(1-\G\chin)\bigr]&=-\Nn\G\chin+\partial_na-\partial_n(a\partial_n\chin),\\
 (\Nn a^{-1}+\G)\bigl[\Nn\chin\bigr]&=\Lp\chin-\G\Nn\chin+\Nn\G\chin+\G\Nn\chin ,
\end{align*}
and adding,
\[
  (\Nn a^{-1}+\G)(A_{nn}\rho)=\Lp\chin+\partial_na-\partial_n(a\,\partial_n\chin)
  =-\nabla\!\cdot\bigl(a(\nabla\chin-n)\bigr)=0
\]
by the cell equation~\eqref{eq:cell} (inner product with $n$).  Since $\Nn$ is
symmetric and $\G$ antisymmetric, $\Nn a^{-1}+\G=K^{T}$, and for any $\ell$
on $\T^d$,
$\frac{d}{dt}\langle\ell,\Psi_f(t,\cdot)\rangle=-\langle K^{T}\ell,\Psi_f\rangle$
by~\eqref{eq:semigroup}; so $K^{T}\ell=0$ is exactly the statement that
$\langle\ell,\Psi_f(t,\cdot)\rangle$ is conserved.

(i) By~\eqref{eq:rho-cell}, \eqref{eq:Ann}, the symmetry of $\Nn$ and
$\Nn1=0$,
\[
  A_{nn}\int_{\T^d}\rho=\int_{\T^d}a(1-\partial_n\chin)+\langle\Nn1,\chin\rangle=A_{nn},
\]
so $\int\rho=1$.  If $\Psi_f(t,\cdot)\to c$ as $t\to\infty$, the conserved
quantity of~(ii) gives
\[
  \langle\rho,f\rangle=\lim_{t\to\infty}\langle\rho,\Psi_f(t,\cdot)\rangle=c\int_{\T^d}\rho=c,
\]
which is~\eqref{eq:representer}.

(iii) By the maximum principle $c\ge0$ in~(i) whenever $f\ge0$; since now
every $f$ has a limit, $f\mapsto\int\rho f$ is a positive functional and
$\rho\ge0$.  Let $K^{T}\ell=0$, so that $\langle\ell,\Psi_f(t,\cdot)\rangle$ is
independent of $t$.  Letting $t\to\infty$ and using~(i),
\[
  \langle\ell,f\rangle=\lim_{t\to\infty}\langle\ell,\Psi_f(t,\cdot)\rangle
  =\langle\ell,1\rangle\,\langle\rho,f\rangle
  \qquad\text{for every }f,
\]
i.e.\ $\ell=\langle\ell,1\rangle\,\rho$: the conserved linear functionals
are exactly the multiples of the far-field value, and the normalization
$\int\rho=1$ selects $\rho$.
\end{proof}

Part~(i) gives a proof of the representation~\eqref{eq:representer} that is
independent of~\cite{Prange13} whenever the lifted solution converges on the
torus, in particular for every Diophantine normal~\cite{GVM11}.

\begin{remark}[Rational normals]\label{rem:rational}
The proofs of Theorems~\ref{thm:riccati} and~\ref{thm:density} use only the
structure of the lifted equation, and for rational $n$ they apply with the
strip problems of~\cite{MV96} in place of Theorem~\ref{thm:known}.  For
rational $n$, where the lifted problem is the collection of the strip problems
for all phases (Section~\ref{sec:lifted}), their content must be read with
the phase in mind.  On each line $\{\theta+zn^\perp\}$, \eqref{eq:rho-cell} is the strip
density~\eqref{eq:rho} for that phase.  In the lifted variables the solutions do not converge as $t\to\infty$: on
each line the strip solution tends to its tail $d(n;s)$, but the lines move
with depth (the cell variable of $\Psi$ is the point itself,
see~\eqref{eq:lift-def}), so $\Psi_f(t,\cdot)$ approaches a traveling
profile, constant along each line and translated by $tn$, built from the
neutral modes of Remark~\ref{rem:fd}.  The density~\eqref{eq:rho-cell} is
still conserved by Theorem~\ref{thm:density}(ii), and
$\langle\rho,\Psi_f(t,\cdot)\rangle$ is the average over the phase of the
strip tails, which for $f=\chin$ is $\bar d(n)$.  The uniqueness argument of
Theorem~\ref{thm:density}(iii) uses convergence to a constant and does not
apply; we do not address uniqueness for~\eqref{eq:kernel} at rational $n$.
The density is still non-negative, since the maximum principle argument
of~(iii) applies to each strip.  For an actual
problem at a fixed $\eps$ the phase is fixed by $\eps$, as
in~\eqref{eq:phase}, and the relevant density is the restriction
of~\eqref{eq:rho-cell} to the corresponding line.
\end{remark}

\subsection{The square root  approximation}
\label{sec:sqrt}

Dropping the commutator term in \eqref{eq:CR} gives $N a^{-1}N=\Lp$, whose
non-negative solution is the operator square root
\begin{equation}\label{eq:N0}
  N_0:=a\sqrt{a^{-1}\Lp} .
\end{equation}
In the Fourier-truncated setting of Section~\ref{sec:computing}, if
$(\lambda_k,f_k)$ are the generalized eigenpairs
\begin{equation}\label{eq:geneig}
  \Lp f_k=\lambda_k\,a\,f_k,\qquad
  \langle f_k,f_\ell\rangle_a:=\int_{\T^d}a\,f_kf_\ell\dv\theta=\delta_{k\ell},
  \qquad 0=\lambda_0\le\lambda_1\le\lambda_2\le\cdots ,
\end{equation}
then $N_0f_k=\sqrt{\lambda_k}\,af_k$, and
\begin{equation}\label{eq:d-sqrt}
  A_{nn}\,d(n)\;\approx\;
  \int_{\T^d}a\,\chin(1-\partial_n\chin)\dv\theta
  +\sum_{k}\sqrt{\lambda_k}\,\langle\chin,f_k\rangle_a^2 ,
\end{equation}
where the modes with $\lambda_k=0$ contribute nothing.  (In the continuum
the eigenvalues accumulate at $0$ for irrational $n$, and $0$ has infinite
multiplicity for rational $n$, so we state the expansion for the truncated
problem only.)
$N_0$ is the DtN operator obtained by freezing the coefficient at its boundary
values along each normal line, that is, by replacing $a(\theta-tn)$ in~\eqref{eq:gvm} by $a(\theta)$; it equals $\Nn$ exactly when $[\G,\Nn]=0$.  This is the case if $a$ is
constant along $n$, since then $a$ and $\Lp$ commute with translations
along $n$, and hence so does $\Nn$; for irrational $n$ this means that $a$ is
constant.  Note that $N_0$ does not distinguish $n$ from $-n$, and the residual of $N_0$
in~\eqref{eq:CR} is $[\G,N_0]$.   For small contrast the difference between~\eqref{eq:d-sqrt} and the exact
$d(n)$ is of third order, while $A_{nn}d(n)$ itself is of second order
(Proposition~\ref{prop:contrast}); Section~\ref{sec:dcurve} compares them
numerically.

\begin{proposition}[Small contrast]\label{prop:contrast}
Let $a=a_0+\delta b$ with $a_0>0$ constant and $b$ smooth and periodic, let
$n$ be irrational, and let $\Nn$, $N_0$ and $\chin$ be the Fourier-truncated
operators and corrector of Section~\ref{sec:marching}.  Then
$\langle\chin,\Nn\chin\rangle-\langle\chin,N_0\chin\rangle=O(\delta^3)$,
with a constant that may depend on $K$ and on $n$.
\end{proposition}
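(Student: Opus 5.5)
Expand in $\delta$. With $a=a_0$ constant the cell equation forces $\chin=0$: the right side $\nabla\cdot(a e_n)=\partial_n a$ vanishes. Writing $\chin=\delta\chi_1+\delta^2\chi_2+\cdots$, the first-order term solves $a_0\Delta\chi_1=\partial_n b$, so $\chi_1=O(1)$ in the truncated space. The quadratic forms therefore satisfy
\[
  \langle\chin,\Nn\chin\rangle-\langle\chin,N_0\chin\rangle
  =\delta^2\langle\chi_1,(\Nn-N_0)\chi_1\rangle+O(\delta^3),
\]
provided $\Nn-N_0=O(\delta)$ as operators on the truncated space. So the proof reduces to two claims: $\Nn-N_0=O(\delta)$, and the first-order difference $D_1$ (writing $\Nn-N_0=\delta D_1+O(\delta^2)$) satisfies $\langle\chi_1,D_1\chi_1\rangle=0$.

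\textbf{Expanding both operators.} At $\delta=0$ both $\Nn$ and $N_0$ equal $N^{(0)}=a_0\sqrt{a_0^{-1}L^{(0)}}$, with $L^{(0)}=-a_0\Delta^\perp$. This operator is diagonal in Fourier modes, with symbol $a_0\,2\pi|k^\perp|$, and commutes with $\G$. In the truncated setting $\Nn$ is the stabilizing solution of \eqref{eq:CR}. I would assume it is analytic in $\delta$, which holds for small $\delta$ because the stabilizing Riccati solution depends smoothly on the data when the Hamiltonian spectrum stays off the imaginary axis apart from the constant mode. The $n$- and $K$-dependent constants enter here, through the smallest nonzero $|k\cdot n^\perp|$ in the truncation.

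Linearize \eqref{eq:CR} at $N^{(0)}$ and write $\Nn=N^{(0)}+\delta N_1+\cdots$, $\Lp=L^{(0)}+\delta L_1$, $a^{-1}=a_0^{-1}-\delta a_0^{-2}b+\cdots$. Using $[\G,N^{(0)}]=0$, the first-order equation is
\[
  N^{(0)}a_0^{-1}N_1+N_1a_0^{-1}N^{(0)}+[\G,N_1]
  =L_1+N^{(0)}a_0^{-2}bN^{(0)} .
\]
In Fourier modes, with $\sigma_k=2\pi|k^\perp|$, this becomes
\[
  (N_1)_{kl}\bigl(\sigma_k+\sigma_l+2\pi i(k-l)\cdot n\bigr)=R_{kl}.
\]
The operator $N_{0,1}$ solves the same equation without the commutator term:
\[
  (N_{0,1})_{kl}(\sigma_k+\sigma_l)=R_{kl}.
\]
Here $R_{kl}$ is real and symmetric and proportional to $\hat b(k-l)$.

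\textbf{The key cancellation.} It remains to show $\langle\chi_1,(N_1-N_{0,1})\chi_1\rangle=0$. Let $s=\sigma_k+\sigma_l$ and $\omega=2\pi(k-l)\cdot n$. Then
\[
  N_1-N_{0,1}=R_{kl}\Bigl(\frac{1}{s+i\omega}-\frac{1}{s}\Bigr)
  =R_{kl}\,\frac{-i\omega s-\dots}{\cdots}.
\]
This is not obviously zero, so the real question is whether the real part contributes. I would exploit the structure of $\chi_1$: $\hat\chi_1(k)=c\,\hat b(k)\,(k\cdot n)/|k|^2$, and the zero mode is absent. The cubic sum
\[
  \sum_{k,l}\overline{\hat\chi_1(k)}\,\hat\chi_1(l)\,R_{kl}\,\bigl[(s+i\omega)^{-1}-s^{-1}\bigr]
\]
involves $\hat b(k)\hat b(l)\hat b(k-l)$. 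Counting powers of $\delta$, this is a $\delta^2$ coefficient times three factors of $b$, so it is cubic in $b$. Because $\chi_1$ is already $O(\delta)$, the whole term is actually $O(\delta^3)$.

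Redoing the count more carefully, the difference is $\delta^2\langle\chi_1,(\Nn-N_0)\chi_1\rangle=\delta^2\cdot O(\delta)=O(\delta^3)$. So the claim follows from $\Nn-N_0=O(\delta)$ alone, and no cancellation is needed. The cross terms $2\delta^3\langle\chi_1,(\Nn-N_0)\chi_2\rangle$ are also $O(\delta^4)$.

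\textbf{Main obstacle.} The only real work is to justify that both $\Nn$ and $N_0$ are $N^{(0)}+O(\delta)$ in the truncated space. For $N_0$ this follows from analyticity of the square root of the positive semidefinite pencil $(\Lp,a)$ away from the zero eigenvalue. Its kernel is just the constants, which are annihilated and orthogonal to $\chin$ in the $a$-weighted sense up to $O(\delta)$; this must be checked with care. For $\Nn$, I would use the implicit function theorem on \eqref{eq:CR} restricted to mean-zero modes. Invertibility of the linearization requires $\sigma_k+\sigma_l+i\omega\ne0$ for $k,l\neq0$, which holds because $\sigma_k>0$ for $k\ne0$ when $n$ is irrational. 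The bound depends on $\min_{|k|\le K}|k^\perp|$, which explains the dependence of the constant on $K$ and $n$.
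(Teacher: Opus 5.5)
Your final argument is the paper's: $\chin=O(\delta)$, and $\Nn-N_0=O(\delta)$ because in the truncated setting both operators are analytic in $\delta$ and equal $N^{(0)}$ at $\delta=0$, so the difference of the quadratic forms is $\delta^2\cdot O(\delta)$. The paper gets analyticity of $\Nn$ from the decay rates $|q_k|>0$ staying off the imaginary axis, which is the same hyperbolicity behind your stabilizing-solution and implicit-function argument on the mean-zero modes. If you use the implicit function theorem, tie its branch to $\Nn$ through uniqueness of the stabilizing solution, since \eqref{eq:CR-discrete} has many solutions.

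Drop the ``key cancellation'' detour. The condition $\langle\chi_1,D_1\chi_1\rangle=0$ is not needed: it would give $O(\delta^4)$, and the paper remarks that there is no reason for it to hold. Also, ``$\chi_1$ is already $O(\delta)$'' confuses $\chi_1=O(1)$ with $\chin=O(\delta)$.
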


\begin{proof}
Since $\chin=0$ for constant $a$, $\chin=O(\delta)$.  For $\delta=0$ both
$\Nn$ and $N_0$ equal the DtN operator $N^{(0)}$ of the constant coefficient
$a_0$, because $\G$ and $N^{(0)}$ are both Fourier multipliers and
$[\G,N^{(0)}]=0$.  Both depend analytically on $\delta$: at $\delta=0$ the
decaying modes have decay rates $|q_k|>0$ ($q_k=2\pi k\cdot n^\perp\neq0$ for
$k\neq0$, since $n$ is irrational), so they stay separated from the imaginary
axis for small $\delta$ and $\Nn$, the flux map on these modes and the
constant, is analytic in $\delta$; and $N_0$ is analytic because the kernel of
$\Lp$ remains the constants.  Hence $\Nn-N_0=O(\delta)$, and the difference of
the two quadratic forms is $O(\delta^3)$.
\end{proof}

\noindent
There is no structural reason for the $\delta^3$ term to vanish.  To first order in $\delta$, the
corrections to $N^{(0)}$ in $\Nn$ and in $N_0$ solve the same linearized
equation, except that the commutator with $\G$ is present only for $\Nn$.  In
the Fourier basis, where $N^{(0)}$ and $\G$ are diagonal with entries
$a_0|q_k|$ and $i\omega_k$ ($q_k=2\pi k\cdot n^\perp$, $\omega_k=2\pi k\cdot
n$), the two corrections therefore differ only in the entries coupling modes
with $\omega_k\neq\omega_{k'}$, that is, through the variation of $b$ along
$n$.  The examples of Section~\ref{sec:dcurve} show the resulting difference.

\section{Computing \texorpdfstring{$d(n)$}{d(n)} from the cell problem}
\label{sec:computing}

The cell problem~\eqref{eq:CR} turns the computation of $d(n)$ into a
computation on $\T^d$:
\begin{enumerate}
\item Solve the standard cell problems~\eqref{eq:cell} to obtain $\chi^j$ and $\astar$.
\item For each boundary normal $n$, compute the toric term
  $T_1(n)=\int_{\T^d}a\chin(1-\partial_n\chin)$ from cell data, and solve the
  cell problem~\eqref{eq:CR} for $\Nn$ by \emph{Riccati doubling}
  (Section~\ref{sec:marching}): in a Fourier basis on $\T^d$ the operators
  $\G$, $\Lp$ and multiplication by $a$ are matrices, and the doubling
  iteration produces a sequence of matrices on the cell converging to the
  solution of the resulting matrix Riccati equation~\eqref{eq:CR-discrete}.  All quantities are cell-level; no
  solve on the global domain $\Omega$ and no phase averaging over rational
  approximants is needed.
\item Assemble $d(n)=(T_1(n)+\langle\chin,\Nn\chin\rangle)/A_{nn}$
  by~\eqref{eq:d-twoterm}, and if desired the density $\rho(\cdot,n)$
  by~\eqref{eq:rho-cell}.
\end{enumerate}
We first describe the discretization and the doubling algorithm
(Section~\ref{sec:marching}).  We then
introduce two test coefficients, used throughout the rest of the paper, and
validate the algorithm against direct strip solves
(Section~\ref{sec:results}); finally we examine the dependence of $d(n)$ on
the normal and measure the error of the square root  approximation
(Section~\ref{sec:dcurve}).

\subsection{Discretization and Riccati doubling}
\label{sec:marching}

We discretize the problem using the Fourier modes $e^{2\pi ik\cdot\theta}$ on $\T^2$ with
\[
  k\in\mathcal K:=\{k\in\mathbb Z^2:\ |k|_\infty\le K\},
  \qquad M:=|\mathcal K|=(2K+1)^2 ,
\]
and we identify each trigonometric polynomial $$f=\sum_{k\in\mathcal K} f_k\,e^{2\pi ik\cdot\theta}$$
with its coefficient vector $(f_k)\in\mathbb C^M$.  If $a$ is not given by a trigonometric polynomial, its Fourier coefficients $\hat a_k$  are
computed by FFT.  In this
basis the operators of the lifted problem~\eqref{eq:lifted} become the
$M\times M$ matrices
\begin{equation}\label{eq:WQA}
  \begin{aligned}
    \G&=iW, &\qquad W&:=\operatorname{diag}(2\pi k\cdot n),\\
    \nabla^\perp&=iQ_\perp, &\qquad Q_\perp&:=\operatorname{diag}(2\pi k\cdot n^\perp),\\
    && A_{kk'}&:=\hat a_{k-k'} ,
  \end{aligned}
\end{equation}
with $W,Q_\perp$ real diagonal and $A$ the Hermitian Toeplitz matrix of
multiplication by $a$, so $\Lp=Q_\perp AQ_\perp$ and the cell
problem~\eqref{eq:CR} becomes
\begin{equation}\label{eq:CR-discrete}
  NA^{-1}N+i\,(WN-NW)=Q_\perp AQ_\perp ,
\end{equation}
in which $n$ enters only through the diagonal entries of $W$ and $Q_\perp$; here $A^{-1}$ is the inverse of the matrix $A$, not the matrix of multiplication by $a^{-1}$, since the truncated flux is $A(\partial_t-iW)\Psi$.
Inner products are computed on coefficient vectors,
$\langle f,g\rangle=\sum_k\overline{f_k}g_k$.  Let $N$ be the DtN matrix of
the truncated lifted problem, that is, of~\eqref{eq:lifted} with $\Psi(t,\cdot)$
restricted to the modes $k\in\mathcal K$ and the operators replaced by the
matrices~\eqref{eq:WQA}.  By the same proof as that of Theorem~\ref{thm:riccati}, $N$ solves~\eqref{eq:CR-discrete}, and by
Lemma~\ref{lem:energy} it is the limit from below of the matrices $N_T$
of~\eqref{eq:NT}; this limit is what the doubling computes, so no uniqueness
result for~\eqref{eq:CR-discrete} is needed.  (In the language of
linear-quadratic control, $N$ is the minimal non-negative solution
of~\eqref{eq:CR-discrete}~\cite{Willems71,LancasterRodman95}.)

In the doubling algorithm, given a $t$ step size $dt$,  we compute $N_T$ for $T=dt,2dt,4dt,\dots$, using
that the truncated lifted problem on $(T,2T)$ is a copy of
the same problem on $(0,T)$, since its coefficient $a(\theta)$ does not depend
on $t$.  In the Fourier coordinates, with $\Psi(t)\in\mathbb C^M$ the
coefficient vector of $\Psi(t,\cdot)$, the energy~\eqref{eq:ET} of
Lemma~\ref{lem:energy} reads
\[
  \mathcal E_T(\Psi)=\int_0^T\Bigl[(\Psi'-iW\Psi)^{*}A\,(\Psi'-iW\Psi)
  +(Q_\perp\Psi)^{*}A\,Q_\perp\Psi\Bigr]\dv t .
\]
For $0<T<\infty$ and $u,v\in\mathbb C^M$ let
\begin{equation}\label{eq:ET-def}
  E_T(u,v):=\min\bigl\{\mathcal E_T(\Psi):\ \Psi\in H^1(0,T;\mathbb C^M),\ \Psi(0)=u,\ \Psi(T)=v\bigr\},
\end{equation}
the least energy of a slab of depth $T$ with both faces prescribed.
The minimizer depends linearly on $(u,v)$, so $E_T$ is a quadratic form in
$(u,v)$, and we denote its $M\times M$ blocks by $P_T,Q_T,R_T$:
\begin{equation}\label{eq:stack-energy}
  E_T(u,v)=u^{*}P_Tu+2\,\mathrm{Re}\,u^{*}Q_Tv+v^{*}R_Tv .
\end{equation}
These blocks are not known in closed form.  The doubling computes them: the
blocks of one slab of depth $dt$ are approximated directly
in~\eqref{eq:PQR}, and Lemma~\ref{lem:glue} below gives the blocks for depth
$2T$ from those for depth $T$.

\emph{The first slab.}  The only discretization in depth is in the first
slab.  We approximate its energy by the midpoint rule: for $u$ on the top
face and $v$ on the bottom face,
\begin{equation}\label{eq:slab}
  E_{dt}(u,v)\approx dt\Bigl[\,p^{*}A\,p+q^{*}A\,q\,\Bigr],\qquad
  p=\frac{v-u}{dt}-iW\,\frac{u+v}{2},\qquad q=iQ_\perp\,\frac{u+v}{2},
\end{equation}
that is, $\partial_t$ is replaced by a difference and $\Psi$ by the average of
the two faces.  With the diagonal matrices
\begin{equation}\label{eq:Dpm}
  D_-:=-\frac1{dt}-\frac{i}{2}W,\qquad D_+:=\frac1{dt}-\frac{i}{2}W
  \qquad(\text{so that }p=D_-u+D_+v),
\end{equation}
the blocks of~\eqref{eq:slab} are
\begin{equation}\label{eq:PQR}
\begin{aligned}
  P_{dt}&=dt\bigl[D_-^{*}AD_-+\tfrac14Q_\perp AQ_\perp\bigr],\qquad
  Q_{dt}=dt\bigl[D_-^{*}AD_++\tfrac14Q_\perp AQ_\perp\bigr],\\
  R_{dt}&=dt\bigl[D_+^{*}AD_++\tfrac14Q_\perp AQ_\perp\bigr].
\end{aligned}
\end{equation}

\emph{Gluing two slabs.}  Two slabs of depth $T$ stacked give one slab of
depth $2T$, and the DtN matrix of a slab is read off its blocks.

\begin{lemma}[Gluing]\label{lem:glue}
In the truncated problem, $N_T=P_T-Q_TR_T^{-1}Q_T^{*}$, and the blocks for
depth $2T$ are
\begin{equation}\label{eq:glue}
\begin{aligned}
  P_{2T}&=P_T-Q_T(P_T+R_T)^{-1}Q_T^{*},\qquad
  Q_{2T}=-Q_T(P_T+R_T)^{-1}Q_T,\\
  R_{2T}&=R_T-Q_T^{*}(P_T+R_T)^{-1}Q_T .
\end{aligned}
\end{equation}
\end{lemma}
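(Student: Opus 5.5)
Both formulas come from minimizing a quadratic form over a free interior variable. We use two facts about the truncated problem. First, $E_T$ is a Hermitian non-negative quadratic form in $(u,v)$. Second, minimization over the interior of a slab splits at any intermediate depth, because the energy $\mathcal E_T$ is additive in $t$.

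\emph{The DtN formula.} By Lemma~\ref{lem:energy} in the truncated setting, $\langle u,N_Tu\rangle$ is the minimum of $\mathcal E_T(\Psi)$ with only $\Psi(0)=u$ prescribed. We split this minimization into two stages. First fix the bottom value $v=\Psi(T)$ and minimize over the interior; this gives $E_T(u,v)$. Then minimize over $v\in\mathbb C^M$. The form $E_T(u,v)=u^*P_Tu+2\,\mathrm{Re}\,u^*Q_Tv+v^*R_Tv$ is minimized in $v$ at $R_Tv=-Q_T^*u$, with minimum value $u^*(P_T-Q_TR_T^{-1}Q_T^*)u$. This requires $R_T$ to be invertible, which we argue as follows. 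If $v^*R_Tv=0$, then $E_T(0,v)=0$. The minimizer $W$ then has zero energy, so $W'-iW_{\mathrm{diag}}W=0$ with $W(0)=0$; in the notation of the proof of Lemma~\ref{lem:energy}, $\partial_t\widetilde W=0$, which forces $W\equiv0$ and so $v=0$. Hence $R_T>0$. Since $N_T$ and $P_T-Q_TR_T^{-1}Q_T^*$ are both Hermitian, equality of the quadratic forms gives equality of the matrices.

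\emph{Gluing.} We use the additivity $\mathcal E_{2T}(\Psi)=\mathcal E_T(\Psi|_{(0,T)})+\mathcal E_T(\Psi(T+\cdot))$. The second term is a copy of the depth-$T$ problem because the coefficients $A,W,Q_\perp$ do not depend on $t$. It follows that
\[
E_{2T}(u,w)=\min_{v}\bigl[E_T(u,v)+E_T(v,w)\bigr].
\]
The inequality $\le$ holds by concatenating the two minimizers, which is admissible since an $H^1$ function glued continuously at $t=T$ is still $H^1$. The inequality $\ge$ holds by restricting a competitor to the two halves. The function of $v$ being minimized is
\[
v^*(R_T+P_T)v+2\,\mathrm{Re}\bigl(u^*Q_Tv+v^*Q_Tw\bigr)+u^*P_Tu+w^*R_Tw .
\]
Here $P_T+R_T\ge R_T>0$ is invertible. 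Writing the linear term as $2\,\mathrm{Re}\,v^*(Q_T^*u+Q_Tw)$, the minimum value is
\[
u^*P_Tu+w^*R_Tw-(Q_T^*u+Q_Tw)^*(P_T+R_T)^{-1}(Q_T^*u+Q_Tw).
\]
Expanding this and reading off the coefficient of $u^*\cdot u$ gives $P_{2T}$, and the coefficient of $w^*\cdot w$ gives $R_{2T}$. The cross term $-2\,\mathrm{Re}\,u^*Q_T(P_T+R_T)^{-1}Q_Tw$ gives $Q_{2T}=-Q_T(P_T+R_T)^{-1}Q_T$. Matching blocks is legitimate because a Hermitian form determines its blocks uniquely.

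The main obstacle is small. It consists of making the two "min over the split variable" identities rigorous: showing that concatenation and restriction preserve the admissible class, and establishing invertibility of $R_T$ and $P_T+R_T$. Both reduce to the strict-convexity argument already used in Lemma~\ref{lem:energy}. The rest is completing the square.
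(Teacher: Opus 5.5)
Your proposal is correct and follows essentially the same route as the paper's proof. Both arguments get $R_T>0$ (and hence $P_T+R_T>0$) from the fact that zero energy forces $\Psi'=iW\Psi$, use $u^{*}N_Tu=\min_vE_T(u,v)$, derive $E_{2T}(u,w)=\min_v\bigl[E_T(u,v)+E_T(v,w)\bigr]$ from depth invariance, and finish by completing the square; you additionally spell out the concatenation/restriction step and why the blocks can be read off the Hermitian form, which the paper leaves implicit.
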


\begin{proof}
$E_T(u,v)=0$ forces $\Psi'=iW\Psi$, so $\Psi(t)=e^{itW}u$ and
$v=e^{iTW}u$; hence $E_T(u,0)>0$ for $u\neq0$ and $E_T(0,v)>0$ for $v\neq0$,
and $P_T$, $R_T$ and $P_T+R_T$ are positive definite.  Since nothing is
prescribed at $t=T$ in~\eqref{eq:NT},
$u^{*}N_Tu=\min_vE_T(u,v)$, and minimizing the quadratic form over $v$ gives
the first formula.  Since $a$ does not depend on $t$, the problem on $(T,2T)$
is the problem on $(0,T)$ translated in depth, so
$E_{2T}(u,v)=\min_w\bigl[E_T(u,w)+E_T(w,v)\bigr]$, the minimum over the value
$w$ at $t=T$; it is attained at $w=-(P_T+R_T)^{-1}(Q_T^{*}u+Q_Tv)$, which
gives~\eqref{eq:glue}.
\end{proof}

Starting from the blocks~\eqref{eq:PQR} and gluing by~\eqref{eq:glue}
produces matrices $\Lambda_T=P_T-Q_TR_T^{-1}Q_T^{*}$, $T=dt,2dt,4dt,\dots$;
after the first slab every step is exact.  They differ from
$N_T$ at finite $T$, because the first slab is only approximated, but their
limit is exactly $N$, for every $dt$.  The reason is that the midpoint rule
reproduces the decaying solutions of the truncated lifted problem exactly,
with the same fluxes; only the factor by which each of them decays across one
slab is changed (Remark~\ref{rem:dt}).  The choice of $dt$ therefore affects
only the speed of convergence.  Numerically, for $dt$ between $0.05$ and $2$ the limits agree with $N$
computed from the modes of the truncated problem to $10^{-13}$ in operator
norm.  We take $dt=1$.

\begin{quote}
\textbf{Riccati doubling.}\quad
\emph{Input:} the Fourier coefficients $\hat a_k$, a unit normal $n$, the
truncation $K$, a slab thickness $dt$, a tolerance.  \emph{Output:} $d(n)$,
and if wanted the density.
\begin{enumerate}
\item[1.] Form $W$, $Q_\perp$, $A$ by~\eqref{eq:WQA}.  Solve the cell
  problems~\eqref{eq:cell} for $\chi^1,\chi^2$ as $M\times M$ linear systems,
  and set $\chin=n_1\chi^1+n_2\chi^2$, $\phi:=\hat a-iAW\chin$ (the coefficients
  of $a(1-\partial_n\chin)$, with $\hat a=(\hat a_k)$),
  $A_{nn}=\phi_0$ (its mean) and $T_1=\langle\chin,\phi\rangle$.  Form
  the slab blocks by~\eqref{eq:PQR} and set $T=dt$.
\item[2.] Repeat: compute $\Lambda_T=P_T-Q_TR_T^{-1}Q_T^{*}$ and
  $E=\langle\chin,\Lambda_T\chin\rangle$; stop if $E$ has changed by less than the tolerance (this suffices for
  $d(n)$; for the density, use the residual of~\eqref{eq:CR-discrete}); otherwise glue by~\eqref{eq:glue} and set $T\leftarrow2T$.
\item[3.] $d(n)=(T_1+E)/A_{nn}$, and
  $\rho(\cdot,n)=\bigl(\phi+\Lambda_T\chin\bigr)/A_{nn}$
  by~\eqref{eq:rho-cell}.
\end{enumerate}
\end{quote}

\begin{remark}[Why the limit does not depend on $dt$]\label{rem:dt}
As $T\to\infty$ with $dt$ fixed, the doubling converges to the DtN matrix
$\Lambda_\infty$ of an infinite stack of slabs of thickness $dt$, and the
claim is that $\Lambda_\infty=N$ exactly, for every $dt$; the step size
affects only the finite-$T$ iterates.  This is a known property of the
doubling algorithm, and we give the short direct check.  Both $\Lambda_\infty$
and $N$ are determined by their action on the decaying modes, each sending
the boundary value of a mode to its flux, so it suffices to check that the
two problems have the same decaying modes and give them the same flux.  A
solution on the infinite stack is a sequence of face values $u_0,u_1,\dots$ with finite
total energy $\sum_j\bigl(u_j^{*}Pu_j+2\,\mathrm{Re}\,u_j^{*}Qu_{j+1}+u_{j+1}^{*}Ru_{j+1}\bigr)$,
where $(P,Q,R)=(P_{dt},Q_{dt},R_{dt})$ are the blocks~\eqref{eq:PQR}.  Setting
the derivative with respect to each interior face to zero gives the
equilibrium conditions
\[
  Q^{*}u_{j-1}+(P+R)u_j+Qu_{j+1}=0,\qquad j\ge1,
\]
and the flux at the top face is $\Lambda_\infty u_0=Pu_0+Qu_1$.  Now try a
geometric sequence $u_j=z^j\varphi$ with $|z|<1$.  Substituting the
blocks~\eqref{eq:PQR}, a short computation gives
\[
  Q^{*}+z(P+R)+z^2Q=-\tfrac{dt}{4}(1+z)^2\bigl[(\mu+iW)A(\mu+iW)-Q_\perp AQ_\perp\bigr],
  \qquad z=\frac{1-\mu\,dt/2}{1+\mu\,dt/2},
\]
so $z^j\varphi$ satisfies the equilibrium conditions exactly when
$(\mu+iW)A(\mu+iW)\varphi=Q_\perp AQ_\perp\varphi$, that is, when
$e^{-\mu t}\varphi$ is a decaying mode of the truncated lifted problem
(Section~\ref{sec:dcurve}); the map $\mu\mapsto z$ sends
$\mathrm{Re}\,\mu>0$ onto $|z|<1$.  For such a mode the same substitution
gives for the flux
\[
  (P+zQ)\varphi=A(\mu+iW)\varphi=N\varphi ,
\]
the continuous flux of that mode.  So the stack and the truncated lifted
problem have the same decaying modes with the same fluxes, and
$\Lambda_\infty=N$.  (The neutral modes of Remark~\ref{rem:fd} have $\mu$
imaginary, hence $|z|=1$, and carry zero flux in both problems.)  In the
Riccati literature this is the statement that the doubling algorithm is
applied to the continuous-time equation after a Cayley transform with a free
parameter, here $2/dt$, and that the transformed equation has the same
solution for every value of the parameter, which affects only the speed of
convergence~\cite{CFL05}.  A one-sided difference in place of the midpoint
rule~\eqref{eq:slab} would not have this property, and its limit would differ
from $N$ at first order in $dt$.
\end{remark}

Iterating~\eqref{eq:glue} is the doubling
algorithm of the algebraic Riccati literature, introduced by
Anderson~\cite{Anderson78} and developed in structure-preserving form by Chu,
Fan, Lin and Wang~\cite{CFLW04}; see Bini, Iannazzo and Meini~\cite{BIM12} for
a textbook account, and Bellman and Wing~\cite{BellmanWing75} for the same
construction in the language of invariant imbedding.  On the decaying modes the coupling block $Q_T$ decays like the boundary
layer and $\|Q_{2T}\|\lesssim\|Q_T\|^2$ by~\eqref{eq:glue}, so the number of
correct digits roughly doubles at each step; on the constant mode (and the
neutral modes of Remark~\ref{rem:fd}) $Q_T$ decays only like $1/T$, but
these modes contribute nothing to $\Lambda_T$.  The change of the energy
$\langle\chin,\Lambda_T\chin\rangle$ between doublings is a practical
stopping test, not an error bound (a mode with very small transverse
frequency contributes little to the increments long before it has
converged); when the full matrix is wanted, for the density, the residual
of~\eqref{eq:CR-discrete} should be used.  In our examples the loop stops after $12$--$16$ doublings, each costing two
Hermitian solves of size $(2K+1)^2$, about a second per normal at $K=8$ on a
laptop; different normals are independent.  By contrast, reading $d(n)$ off the far field
$\Psi(T,\cdot)$ at a finite depth can converge arbitrarily slowly for
irrational $n$ violating the small divisors
condition~\cite[Theorem~1.3]{Prange13}, Newton's method
for~\eqref{eq:CR} requires at each step a Lyapunov solve that is singular at
rational normals, and the direct computation from the decaying modes of the
truncated problem (used as a check in Section~\ref{sec:dcurve}) must sort
the modes by their decay rates and invert the matrix of eigenvectors, which
is delicate when decay rates are close to zero; the doubling needs neither,
only Hermitian solves.  The doubling is implemented in the Python package
\texttt{bltail}~\cite{bltail} (function \texttt{dtn\_doubling}), which takes
the periodic coefficient $a$ and a list of normals and returns $d(n)$. The examples 
there reproduce the doubling computations in Sections~\ref{sec:results}
and~\ref{sec:dcurve}.

\subsection{Test coefficients and validation}
\label{sec:results}

We will consider two test coefficients throughout the rest of the paper.

\emph{Example 1: a smooth trigonometric coefficient.}  Our first coefficient is
\begin{equation}\label{eq:a-num}
  a(y) = 2 + 0.5\sin(2\pi y_1) + 0.7\sin(2\pi(y_1+y_2)),
\end{equation}
which varies only moderately.  The homogenized matrix is
$\astar \approx \bigl[\begin{smallmatrix}1.867&-0.066\\-0.066&1.935\end{smallmatrix}\bigr]$.
This coefficient has the symmetry
$a(-y-\tfrac12e_1)=a(y)$, from which one can check that 
$T_1(n)=\int a\chin(1-\partial_n\chin)=0$ and $d(n)=d(-n)$.
 The coefficient is a trigonometric polynomial, so its Fourier
representation is exact (the correctors and the DtN matrix are not), and the
results in what follows agree to eight digits between $K=8$ and $K=12$ and to
twelve digits between $K=12$ and $K=16$.

\emph{Example 2: a high-contrast inclusion.}  As a very different test we take
a smoothed piecewise constant coefficient with contrast $10$: two unequal
squares $[0.15,0.5]^2$ and $[0.6,0.85]\times[0.6,0.8]$ with $a=10$, in a
background $a=1$, the edges smoothed over a width $\eta=0.06$,
Figure~\ref{fig:dcurve2} left.  Explicitly,
\begin{equation}\label{eq:a-incl}
  a(y)=1+9\bigl[\omega(y_1;0.15,0.5)\,\omega(y_2;0.15,0.5)
  +\omega(y_1;0.6,0.85)\,\omega(y_2;0.6,0.8)\bigr],
\end{equation}
with the periodic window
$\omega(y;y_0,y_1)=\tfrac12\sum_{m\in\mathbb Z}\bigl[\tanh\tfrac{y+m-y_0}{\eta}-\tanh\tfrac{y+m-y_1}{\eta}\bigr]$.  This coefficient
does not have the symmetry of Example 1, and so $T_1\neq0$ and $d(n)\neq d(-n)$ in general.  Its
Fourier coefficients are computed by FFT on a $1024^2$ grid and the doubling
values converge to seven digits for $K=12$ and to four digits already at
$K=8$.  The convergence in $K$ is no slower near a rational normal of small
period: at $\theta=44.9^\circ$ and $45.5^\circ$ the values of $d(n)$ agree to
eight digits between $K=8$ and $K=20$ for~\eqref{eq:a-num}, and to six
digits between $K=8$ and $K=16$ for the inclusion.  (The truncated problem
has no small divisors; how the results converge as $K\to\infty$ for a normal
that is extremely well approximated by rationals is not addressed here.)  The homogenized matrix is
$\astar \approx \bigl[\begin{smallmatrix}1.703&0.041\\0.041&1.669\end{smallmatrix}\bigr]$.

We first validate the method at $n=e_1$, where the lifted problem is the
family of strip problems~\eqref{eq:psi} over all phases $s$ and the doubling
algorithm gives us the phase average $\bar d(e_1)$.  Table~\ref{tab:e1} compares the
DtN term $\langle\chin,\Nn\chin\rangle$ of~\eqref{eq:d-twoterm}, which
determines $\bar d(e_1)$, with the DtN term of the strip
formula~\eqref{eq:d-formula}, obtained by directly computing the strip
solutions at $16$ equally spaced phases and averaging over $s$.  We used finite
differences for Example~1 and finite volumes for Example~2.  The values agree to the accuracy of the strip
solves.

\begin{table}[htbp]
\centering
\begin{tabular}{lcc}
\toprule
 & strip solves & Riccati doubling \\
\midrule
Example 1 & $5.4233\times10^{-3}$ & $5.4234\times10^{-3}$ \\
Example 2 & $2.0448\times10^{-2}$ & $2.0447\times10^{-2}$ \\
\bottomrule
\end{tabular}
\caption{The DtN quadratic form $\langle\chin,\Nn\chin\rangle$ at $n=e_1$,
  from strip solves averaged over $16$ phases and from Riccati doubling ($K=8$
  for Example~1, $K=12$ for Example~2).
  Strip solves: finite differences (Example~1) and cell-centered finite
  volumes (Example~2), with Richardson extrapolation in the grid size.}
\label{tab:e1}
\end{table}

Figure~\ref{fig:rho} shows the density for both coefficients, at $n=e_1$ and
at the golden direction $\theta=\arctan\varphi\approx58.28^\circ$, $\varphi$
the golden ratio.  It is a smooth positive function with mean one
(Theorem~\ref{thm:density}), and it depends visibly on the normal: for the
inclusion, at $n=e_1$ the weight sits on the interiors of the inclusions,
while at the golden direction it sits on their edges and corners, where the
boundary line crosses the interface.

\begin{figure}[tbp]
\centering
\includegraphics[width=0.72\textwidth]{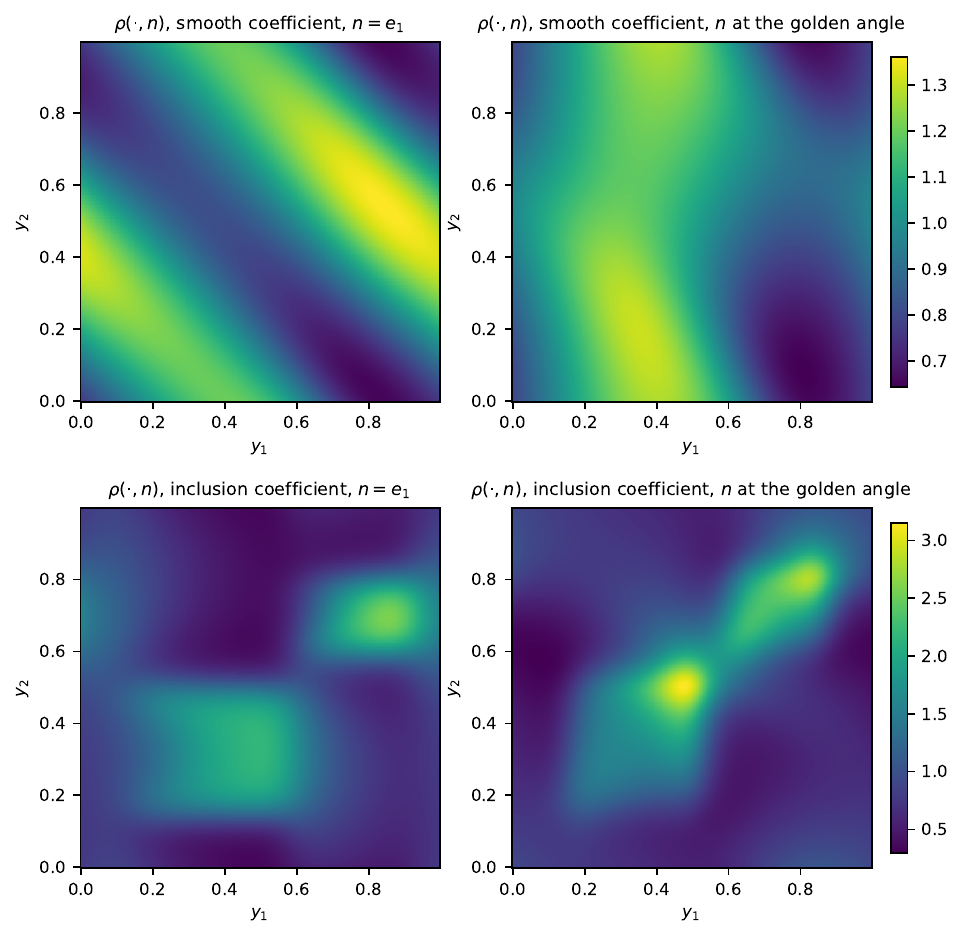}
\caption{The boundary density $\rho(\cdot,n)$ of~\eqref{eq:rho-cell} on the
  periodicity cell, for the coefficient~\eqref{eq:a-num} (top) and the
  inclusion coefficient of Figure~\ref{fig:dcurve2} (bottom), at $n=e_1$ (left)
  and at the golden direction (right); each row has its own color scale.}
\label{fig:rho}
\end{figure}

\FloatBarrier
\subsection{Dependence on the normal and the square root approximation}
\label{sec:dcurve}

Figures~\ref{fig:dcurve} and~\ref{fig:dcurve2} show $d(n)$ over the circle
of outward normals, computed by Riccati doubling, together with the
square root value~\eqref{eq:d-sqrt}: for the smooth coefficient the two
differ by a few percent at most, while for the inclusion the square root is a
poor substitute for the cell problem~\eqref{eq:CR}.  In both cases $d(n)$ is
continuous, with kinks at the rational normals of small period, consistent
with the known regularity of the
tail~\cite{GVM12,AKMP17,SZ20,FK17,FZ19}; the kinks occur at the normals
along which $a$ has a Fourier mode constant on the lines of
Remark~\ref{rem:fd}, which for~\eqref{eq:a-num} are only $0^\circ,180^\circ$
and $45^\circ,225^\circ$, the directions of its two modes $(1,0)$ and
$(1,1)$, while the inclusion has all modes.  Since $\chi_{-n}=-\chin$, the
toric term is odd, $T_1(-n)=-T_1(n)$, while the DtN term is non-negative,
so $d(n)+d(-n)\ge0$.  For~\eqref{eq:a-num}, $a(-y)=a(y+(\tfrac12,0))$, a
reflection up to a cell translation, so $d(-n)=d(n)$ (and $T_1=0$); the
inclusion has no such symmetry and its tail is clearly asymmetric.
Table~\ref{tab:golden} gives $\bar d(n)$ at the Fibonacci approximants of the
golden direction $\arctan\varphi\approx58.28^\circ$, which converge to the
golden-direction value, in agreement with the continuity of $\bar d$.

\begin{table}[htbp]
\centering
\begin{tabular}{lccc}
\toprule
$n$ & $\theta$ (deg) & $\bar d(n)$, coefficient~\eqref{eq:a-num} & $\bar d(n)$, inclusion \\
\midrule
$(1,1)$ & 45.0 & $2.351\times10^{-3}$ & $1.272\times10^{-2}$ \\
$(1,2)$ & 63.4 & $3.433\times10^{-3}$ & $1.427\times10^{-2}$ \\
$(2,3)$ & 56.3 & $3.202\times10^{-3}$ & $1.435\times10^{-2}$ \\
$(3,5)$ & 59.0 & $3.318\times10^{-3}$ & $1.443\times10^{-2}$ \\
$(5,8)$ & 58.0 & $3.278\times10^{-3}$ & $1.441\times10^{-2}$ \\
$(8,13)$ & 58.4 & $3.294\times10^{-3}$ & $1.442\times10^{-2}$ \\
$\arctan\varphi$ & 58.3 & $3.290\times10^{-3}$ & $1.442\times10^{-2}$ \\
\bottomrule
\end{tabular}
\caption{$\bar d(n)$ at the golden direction and its Fibonacci approximants
  $(p,q)\mapsto\theta=\arctan(q/p)$, for both coefficients.  For rational $n$
  the lifted problem yields the phase average $\bar d(n)$ automatically.}
\label{tab:golden}
\end{table}

\begin{figure}[htbp]
\centering
\includegraphics[width=0.9\textwidth]{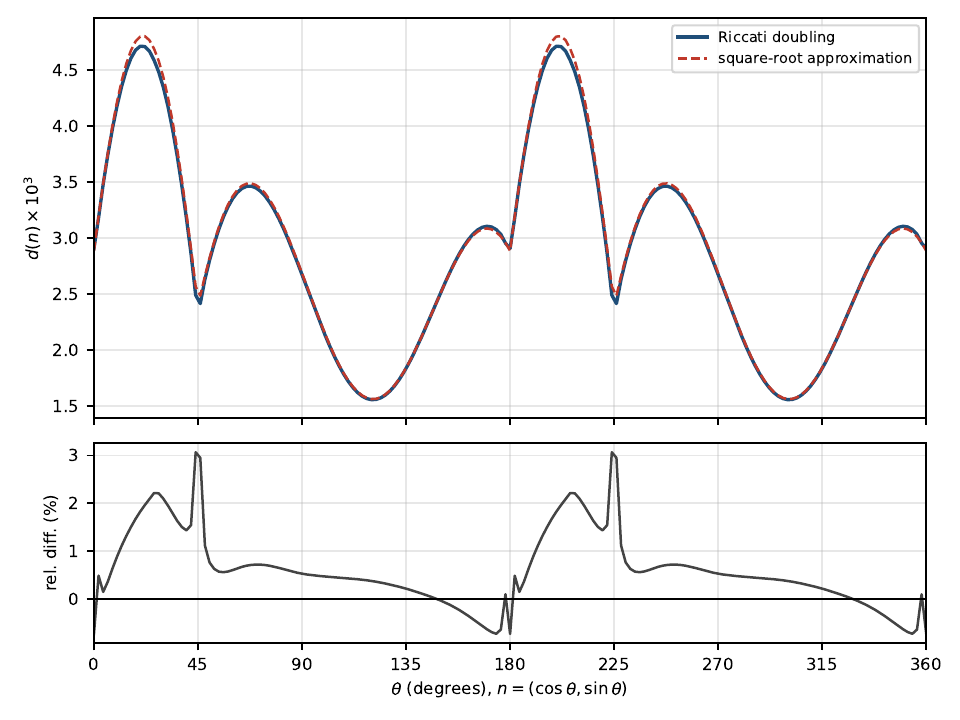}
\caption{Top: the boundary layer tail $d(n)$, $n=(\cos\theta,\sin\theta)$, for
  the coefficient~\eqref{eq:a-num}, computed by Riccati doubling (solid) and by
  the square root  value~\eqref{eq:d-sqrt} (dashed).  Bottom: relative difference.
  The kinks at $\theta=0^\circ,45^\circ,180^\circ,225^\circ$ are at the rational
  normals of smallest period; the tail is continuous there~\cite{FZ19}, and
  the kinks are what we observe numerically.  The commutator contribution is
  largest at these normals.}
\label{fig:dcurve}
\end{figure}

As an independent check of the doubling, at the golden direction we also
computed $d(n)$ a second way, without the DtN operator.  By
Theorem~\ref{thm:density}, $d(n)=\int\rho\,\chin$ is the far field, the limit
as $t\to\infty$, of the lifted solution with data $\chin$.  For the truncated
lifted problem this solution can be found exactly.  Since the coefficients do
not depend on $t$, it is a constant plus a combination of decaying
exponentials $e^{-\mu t}\varphi$, where $\mathrm{Re}\,\mu>0$ and
$(\mu+iW)A(\mu+iW)\varphi=Q_\perp AQ_\perp\varphi$.  Matching the data at
$t=0$ determines the coefficients, and the constant is the far field.  The two
values of $d(n)$ agree to a relative error of $5\cdot10^{-14}$
for~\eqref{eq:a-num} ($K=8$) and $6\cdot10^{-12}$ for the inclusion ($K=12$).

\begin{figure}[htbp]
\centering
\includegraphics[width=\textwidth]{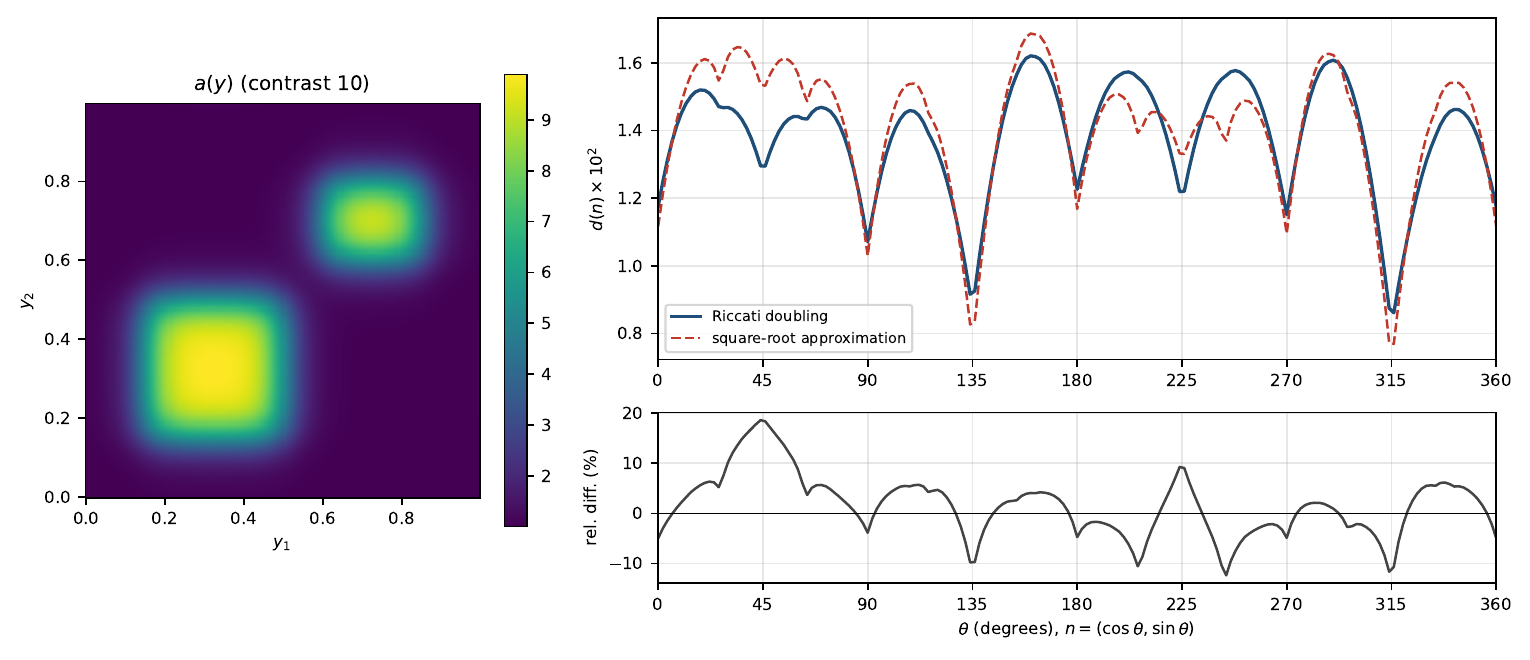}
\caption{Left: the inclusion coefficient.  Right: $d(n)$ over the circle of
  outward normals (solid) and the square root  value~\eqref{eq:d-sqrt}
  (dashed), and their relative difference.}
\label{fig:dcurve2}
\end{figure}

\begin{remark}[Piecewise constant coefficients]
Without smoothing, the Fourier--Galerkin doubling and the finite-volume strip
solves both still work, but they converge slowly: for an inclusion with
sharp edges and corners the cell corrector has limited regularity, and at the
largest resolutions we tried the two methods still differed by more than a
percent.  Efficient treatment of nonsmooth coefficients by using a discretization
of the cell that resolves the interfaces and corners requires further study.  For the questions addressed in this work, the
smoothed coefficient is adequate.
\end{remark}

\FloatBarrier
\section{The approximation \texorpdfstring{$\combo$}{tilde + star} of the boundary corrector}
\label{sec:approx}

With $d(n)$ computable for every normal, we have $\tstar$  on any smooth
domain satisfying the assumption of Remark~\ref{rem:ratirr}, but at finite $\eps$ it misses the boundary oscillation entirely.  We now
propose an approximation of $\teps$ from solves with only the
\emph{constant} operator $-\nabla\cdot(\astar\nabla\,\cdot)$.

\begin{definition}
Define the \emph{tilde corrector} $\ttilde$ as the solution of
\begin{equation}\label{eq:tilde}
  -\nabla\cdot(\astar\nabla\ttilde) = 0 \;\text{ in }\Omega,
  \qquad \ttilde = \chin(x/\eps)\,\dnu(x) \;\text{ on }\bOmega,
\end{equation}
and let $\tstar$ be the homogenized corrector~\eqref{eq:tstar}.
\end{definition}

\noindent
The proposed approximation to $\teps$ is
\begin{equation}\label{eq:combo}
  \teps \;\approx\; \combo.
\end{equation}
By linearity, $\combo$ solves
\begin{equation}
  -\nabla\cdot(\astar\nabla(\combo)) = 0 \;\text{ in }\Omega,
  \qquad
  \combo = \bigl(\chin(x/\eps) + d(n(x))\bigr)\dnu(x)
  \;\text{ on }\bOmega.
\end{equation}
No additional solve is required beyond~\eqref{eq:tilde} and~\eqref{eq:tstar}.
Both $\ttilde$ and $\tstar$ use the constant-coefficient operator $\astar$, so
neither requires a fine interior mesh to resolve oscillatory coefficients.
The only oscillations are only in the boundary data of $\ttilde$, which requires
adequate resolution along the $(d{-}1)$-dimensional boundary $\bOmega$ at scale
$\eps$, which is much cheaper than resolving $a(x/\eps)$ in the full $d$-dimensional interior.
The two terms play complementary roles as $\eps\to 0$:
\begin{itemize}
\item $\ttilde \to 0$ in $L^2(\Omega)$ as $\eps\to 0$.  Each $\chi^j$ has
  zero mean on $\T^d$, so the oscillatory data $f_\eps:=\chin(x/\eps)\dnu$
  is a sum over $k\neq0$ of terms $c_k(x)e^{2\pi ik\cdot x/\eps}$ with smooth
  amplitudes $c_k$ (the Fourier coefficients of $\chi^j$ times $n_j\dnu$),
  which decay rapidly in $k$.  On a uniformly convex boundary the phase
  $k\cdot x$ has nondegenerate stationary points on $\bOmega$ (where the
  normal is parallel to $k$), so for smooth $g$ the stationary phase bound
  gives $\int_{\bOmega}c_k\,e^{2\pi ik\cdot x/\eps}g\dv s=O((\eps/|k|)^{1/2})$,
  uniformly in the direction of $k$; summing over $k$, and using that
  $f_\eps$ is bounded in $L^2(\bOmega)$, gives $f_\eps\rightharpoonup0$
  weakly in $L^2(\bOmega)$ (on domains of finite type the stationary points
  are degenerate of finite order and the decay is slower but still holds).
  The solution operator of the constant-coefficient Dirichlet problem,
  $L^2(\bOmega)\to L^2(\Omega)$, is compact, so $\ttilde\to0$ strongly in
  $L^2(\Omega)$.
\item Under the standing assumption of Remark~\ref{rem:ratirr} (smooth,
  uniformly convex or of finite type), $\teps\to\tstar$ in $L^2(\Omega)$ as
  $\eps\to 0$~\cite{GVM12,Zhuge18}.
\end{itemize}
Therefore, under the standing assumption,
$\combo\to \tstar$ in $L^2(\Omega)$ and the approximation has the correct
$\eps\to 0$ limit (flat sides with rational normal are discussed in
Section~\ref{sec:conclusion}).  For finite $\eps$, $\ttilde$ represents the oscillatory boundary-layer
contribution while
$\tstar$ gives the bulk correction.
The boundary values of $\combo$ differ from those of $\teps$ by
$d(n)\dnu$, which is of order one, so $\|\teps-(\combo)\|_{L^2(\Omega)}$
cannot be smaller than order $\eps^{1/2}$, and the same holds for $\tstar$
alone.\footnote{Let $w=\teps-(\combo)$ and let $\Omega_\delta$ be the layer
of width $\delta$ along $\bOmega$.  The trace inequality
$\|w\|^2_{L^2(\bOmega)}\le C(\delta^{-1}\|w\|^2_{L^2(\Omega_\delta)}+\delta\|\nabla w\|^2_{L^2(\Omega_\delta)})$,
the energy bounds $\|\nabla\teps\|,\|\nabla\ttilde\|\le C\eps^{-1/2}$ (both
solve problems with the same $O(\eps^{-1/2})$ data in $H^{1/2}(\bOmega)$),
and $\|w\|_{L^2(\bOmega)}\to\|d\,\dnu\|_{L^2(\bOmega)}>0$ give, with
$\delta=\eta\eps$ and $\eta$ small, $\|w\|^2_{L^2(\Omega)}\ge c\,\eta\,\eps$
for small $\eps$.}

How fast $\ttilde\to0$ depends on the geometry of $\bOmega$, and we do not
pursue this here.  On a flat side with rational normal the situation is explicit: $\teps$
differs from its limit by an $O(1)$ oscillatory function on a strip of width
$\eps$, so $\|\teps-\tstar\|_{L^2(\Omega)}$ is of order exactly $\eps^{1/2}$;
the corresponding upper bound on a rectangle is~\cite[Lemma~3.4]{AA99}.  On
such a side $\ttilde$ does not tend to zero, since the restricted data
$\chin(x/\eps)$ has a nonzero mean along the side, and the combination must
be corrected accordingly (Section~\ref{sec:conclusion}). 

\FloatBarrier
\section{Numerical experiments on the unit disk}
\label{sec:disk}

We test the approximation on $\Omega = \{|x|<1\}\subset\R^2$ with forcing
$f = 1$, for both coefficients of Section~\ref{sec:results}.
The boundary data $d(n(x))\dnu$ of $\tstar$ is computed by Riccati doubling
(Section~\ref{sec:marching}) at $180$ equally spaced angles and interpolated
trigonometrically; $\ttilde$ and $\tstar$ use the full homogenized matrix
$\astar$, for which $u^0=(1-|x|^2)/(2\operatorname{tr}\astar)$ and
$\dnu=-1/\operatorname{tr}\astar$ on the circle.
All three correctors are computed by the finite element method (NGSolve) on the
unit disk.  The cell correctors $\chi^j$ are solved on a $64\times 64$ periodic
finite difference grid.  The oscillatory coefficient
$a(x/\eps)$ and the boundary data $\chin(x/\eps)$ are sampled on a Cartesian grid
with $96$ points per period and interpolated bilinearly; the finite element space
has order $3$ on a mesh of size $\eps/5$ for the coefficient~\eqref{eq:a-num}
(order $2$ and $\eps/10$ for the inclusion), and the reported norms are
converged with respect to both.

Figure~\ref{fig:tstar} shows the limit $\tstar$ itself for the two coefficients.
For~\eqref{eq:a-num} it is a gentle two-lobed function; for the inclusion its
boundary data $d(n(x))\dnu$ inherits the kinks of $d(n)$ at the rational normals
of small period, which appear as spots on the circle at
$0^\circ,45^\circ,90^\circ,\dots$ and decay rapidly into the interior.

\begin{figure}[htbp]
\centering
\includegraphics[width=0.95\textwidth]{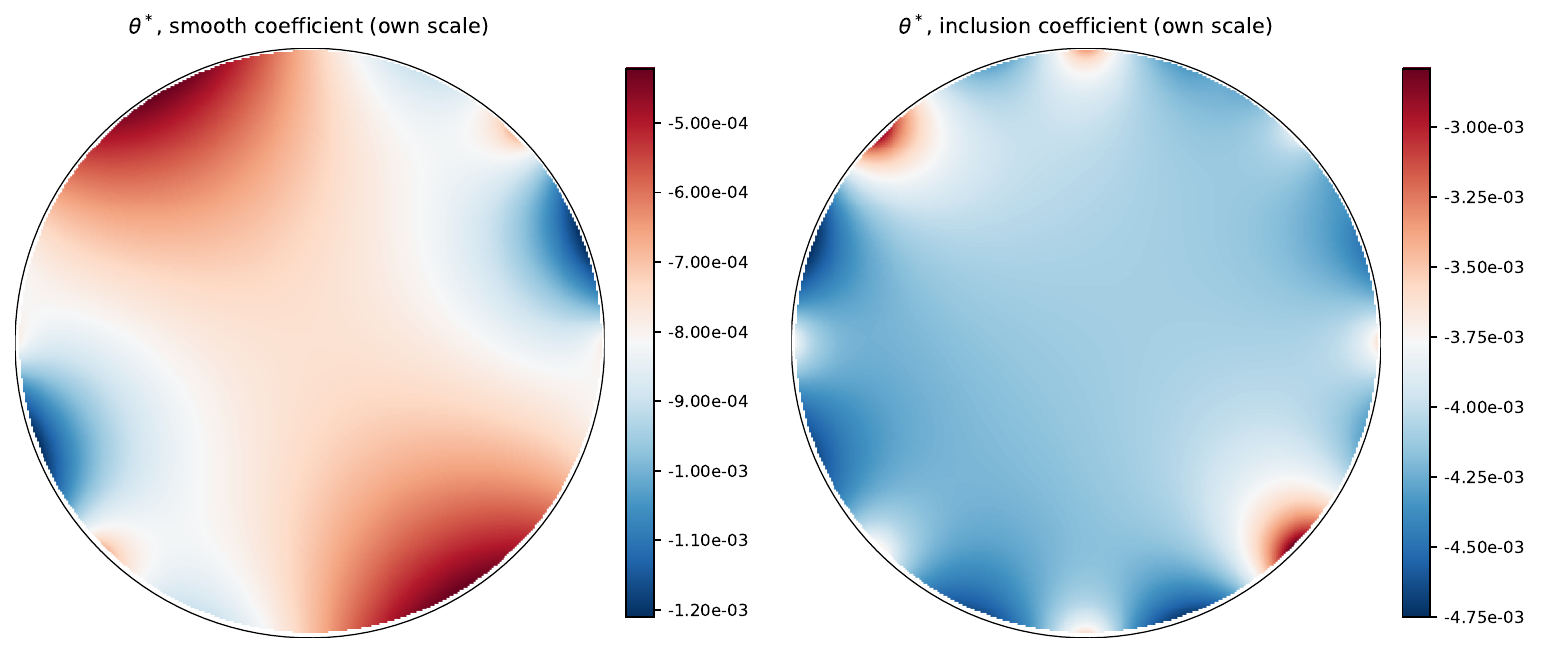}
\caption{The homogenized corrector $\tstar$ on the unit disk for the
  coefficient~\eqref{eq:a-num} (left) and for the inclusion coefficient of
  Figure~\ref{fig:dcurve2} (right), each on its own color scale.  Since
  $\dnu=-1/\operatorname{tr}\astar$, $\tstar$ is negative throughout; white
  marks the middle of the range, not zero.}
\label{fig:tstar}
\end{figure}

\emph{Example 1.}
Table~\ref{tab:norms} and Figure~\ref{fig:convergence} show that adding
$\tstar$ to $\ttilde$ reduces the error by a factor of two to four compared
with $\ttilde$ alone, and that $\|\teps-(\combo)\|$ decays at a
rate close to $\eps^{1/2}$.

\begin{table}[htbp]
\centering
\begin{tabular}{cccc}
\toprule
$\eps$ & $\|\teps\|$ & $\|\teps - \ttilde\|$ & $\|\teps - (\combo)\|$ \\
\midrule
$0.20$ & $5.79\times10^{-3}$ & $1.43\times10^{-3}$ & $5.84\times10^{-4}$ \\
$0.10$ & $4.42\times10^{-3}$ & $1.21\times10^{-3}$ & $4.45\times10^{-4}$ \\
$0.05$ & $2.91\times10^{-3}$ & $1.23\times10^{-3}$ & $3.07\times10^{-4}$ \\
\bottomrule
\end{tabular}
\caption{Coefficient~\eqref{eq:a-num}: $L^2(\Omega)$ norms for the boundary
  corrector and approximation errors on the unit disk
  ($\|\tstar\|=1.37\times10^{-3}$); the log-log slopes of
  $\|\teps-(\combo)\|$ are $0.39$ and $0.54$ over the two intervals.}
\label{tab:norms}
\end{table}

At these values of $\eps$, $\tstar$ alone gives little improvement
($\|\teps-\tstar\|\approx\|\teps\|$), and $\|\teps-\ttilde\|$ stays near
$\|\tstar\|$, since $\ttilde\to0$ while $\teps\to\tstar$.  The combination is
better than either part: $\tstar$ supplies the limit, and $\ttilde$ the
oscillation of the boundary data.

\begin{figure}[htbp]
\centering
\includegraphics[width=0.6\textwidth]{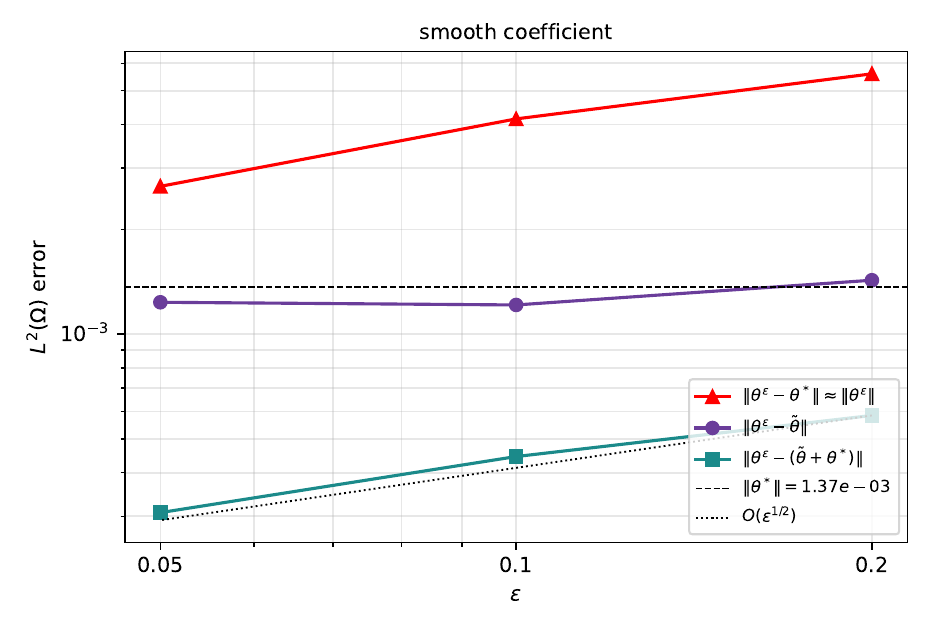}
\caption{Coefficient~\eqref{eq:a-num}: $L^2(\Omega)$ approximation errors vs.\
  $\varepsilon$ on the unit disk, for $\tilde\theta+\theta^*$ (teal squares),
  $\tilde\theta$ alone (purple circles) and $\theta^*$ alone (red triangles).
  The dashed line marks $\|\theta^*\|$.}
\label{fig:convergence}
\end{figure}

\begin{figure}[htbp]
\centering
\includegraphics[width=0.78\textwidth]{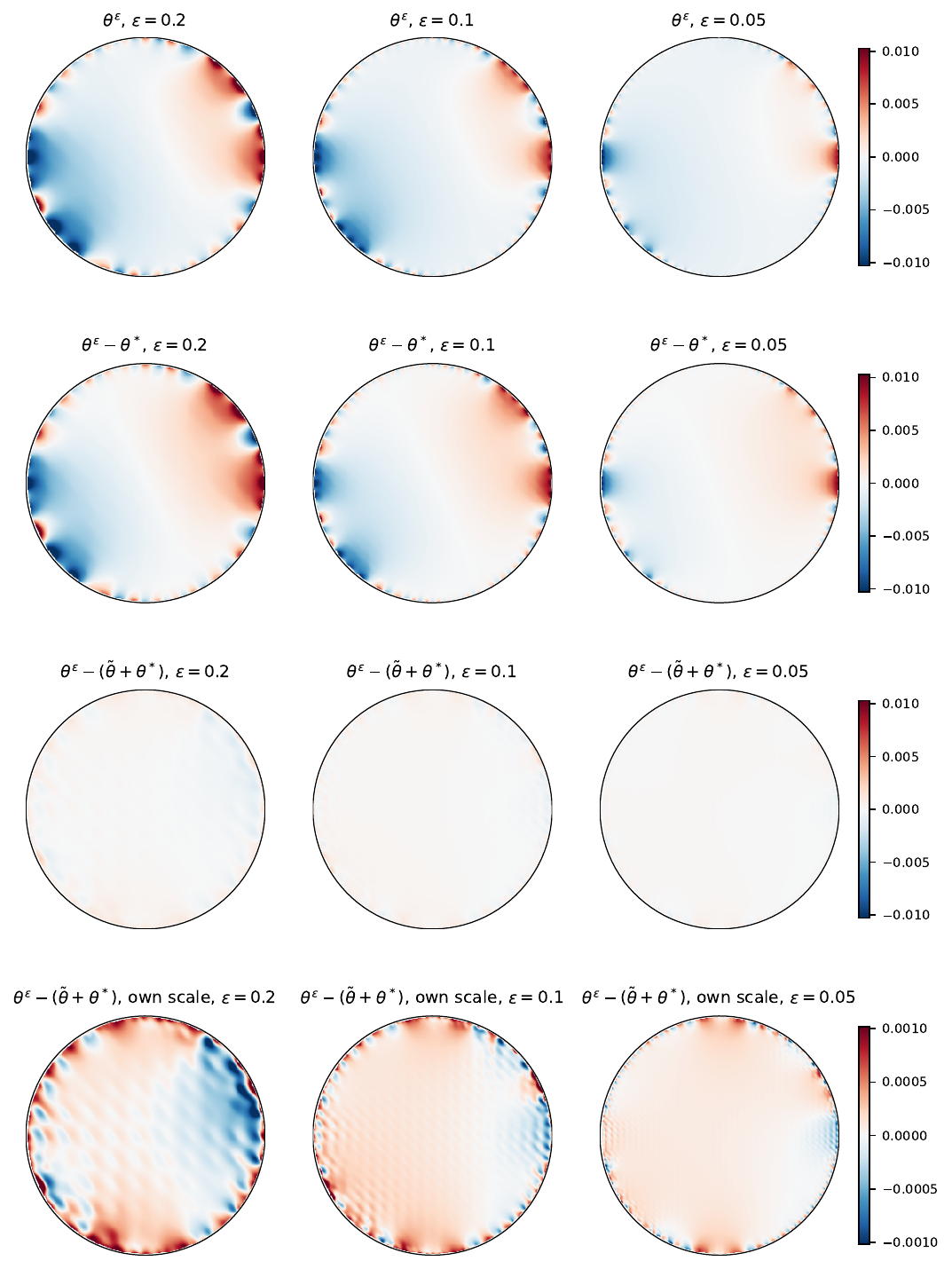}
\caption{Coefficient~\eqref{eq:a-num}: boundary corrector $\theta^\varepsilon$
  and approximation residuals on
  the unit disk for $\varepsilon = 0.20, 0.10, 0.05$ (columns left to right).
  Rows 1--3 share the same color scale ($\sim 10^{-2}$).
  \emph{Row 1:} $\theta^\varepsilon$.
  \emph{Row 2:} $\theta^\varepsilon - \theta^*$, indistinguishable from
  row 1 at this scale.
  \emph{Row 3:} $\theta^\varepsilon - (\tilde\theta+\theta^*)$ on the same
  scale.
  \emph{Row 4:} the same residual on its own scale ($\sim 10^{-3}$).}
\label{fig:panels}
\end{figure}

\emph{Example 2.}
Table~\ref{tab:norms2} and Figures~\ref{fig:convergence2}--\ref{fig:panels2}
repeat the experiment with the inclusion coefficient.  The combined
approximation again gives the smallest error, although over this range of
$\eps$ the observed decay is slower.
Restricting the error to sectors around the rational directions of small
period ($0^\circ,45^\circ,90^\circ,\dots$) suggests that the slow rate comes
mainly from there, where $d(n)$ has kinks and the boundary layer decays slowly
into the interior.

\begin{table}[htbp]
\centering
\begin{tabular}{cccc}
\toprule
$\eps$ & $\|\teps\|$ & $\|\teps - \ttilde\|$ & $\|\teps - (\combo)\|$ \\
\midrule
$0.20$ & $1.63\times10^{-2}$ & $6.75\times10^{-3}$ & $2.78\times10^{-3}$ \\
$0.10$ & $1.56\times10^{-2}$ & $7.86\times10^{-3}$ & $2.27\times10^{-3}$ \\
$0.05$ & $1.41\times10^{-2}$ & $8.32\times10^{-3}$ & $1.95\times10^{-3}$ \\
\bottomrule
\end{tabular}
\caption{Inclusion coefficient: $L^2(\Omega)$ norms and approximation errors on
  the unit disk ($\|\tstar\|=7.30\times10^{-3}$); the log-log slopes of
  $\|\teps-(\combo)\|$ are $0.29$ and $0.22$.}
\label{tab:norms2}
\end{table}

\begin{figure}[htbp]
\centering
\includegraphics[width=0.6\textwidth]{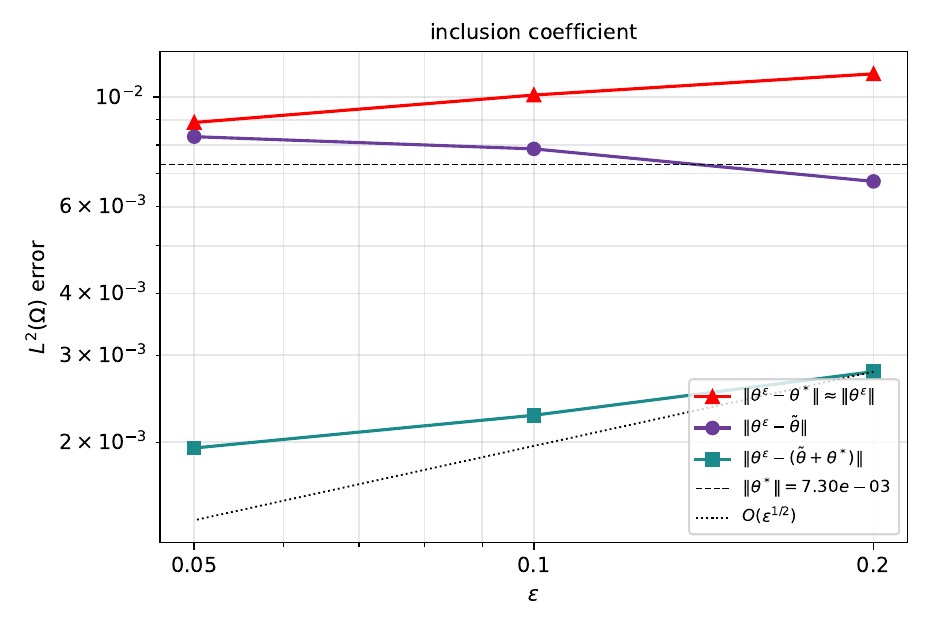}
\caption{Inclusion coefficient: $L^2(\Omega)$ approximation errors vs.\ $\eps$
  on the unit disk, as in Figure~\ref{fig:convergence}.}
\label{fig:convergence2}
\end{figure}

\begin{figure}[htbp]
\centering
\includegraphics[width=0.78\textwidth]{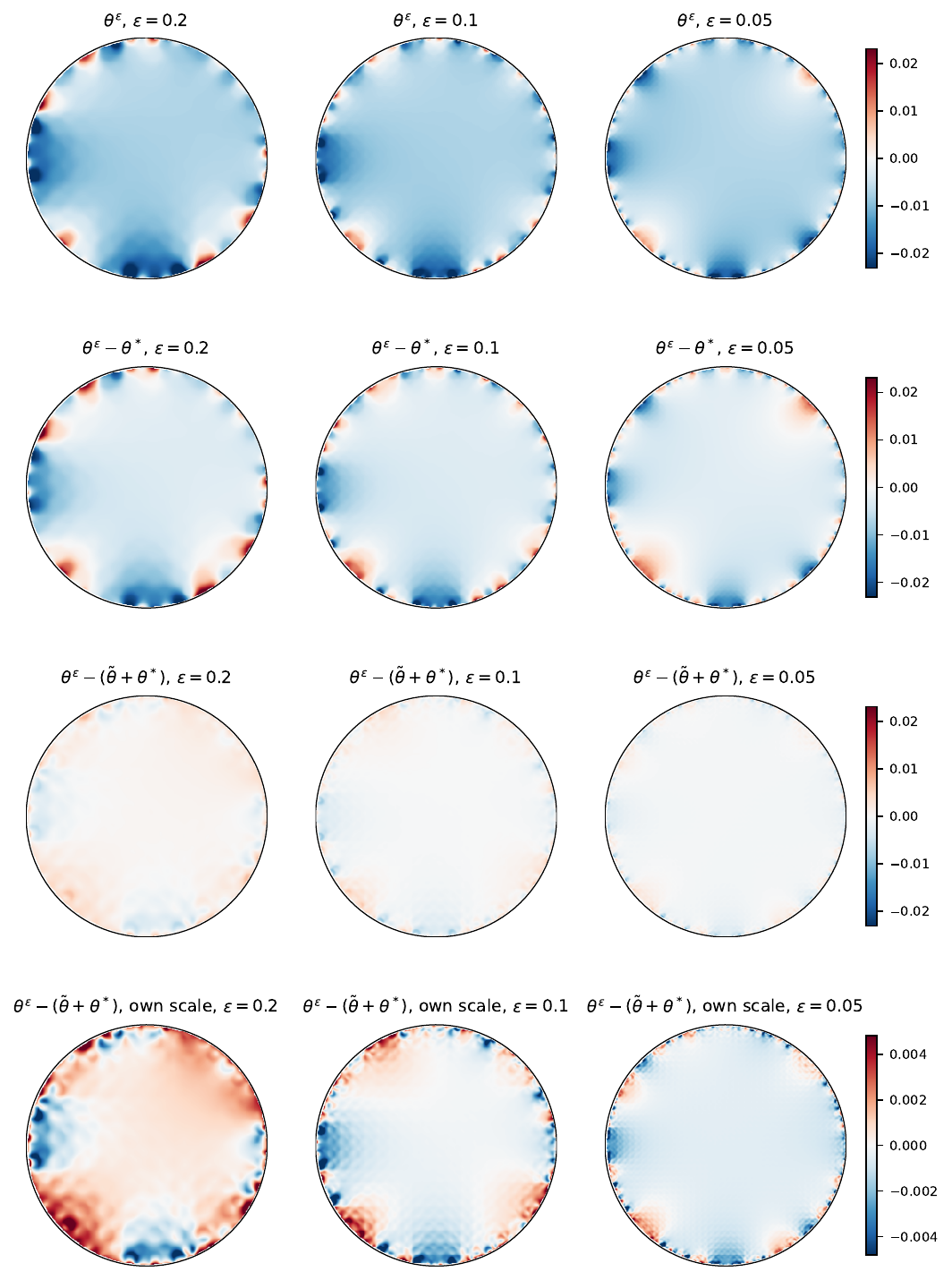}
\caption{Inclusion coefficient: $\teps$ and approximation residuals on the unit
  disk for $\eps=0.20,0.10,0.05$, with the same layout as Figure~\ref{fig:panels}
  (rows 1--3 on a common scale, row 4 on its own scale).}
\label{fig:panels2}
\end{figure}

\section{Discussion}
\label{sec:conclusion}

We have shown (Theorem~\ref{thm:riccati}) that the DtN operator of the lifted
problem satisfies the Riccati equation~\eqref{eq:CR} with a commutator on a
period cell. Therefore, for smooth uniformly convex domains, and more generally domains of finite type, the limiting boundary data of the
boundary corrector can be computed from cell problems only. 
We have also shown that the corresponding density satisfies the kernel
equation~\eqref{eq:kernel}. 
The solution to~\eqref{eq:CR} can be computed by a Riccati doubling algorithm, well known in the control theory community, 
and therefore $\tstar$ is available by solving
one cell problem per normal.  These results are stated for operators acting on
smooth periodic functions, taking the well-posedness of the lifted problem
from~\cite{GVM12}; the function spaces on which $\Nn$ and the commutator extend
for irrational normals are left open.

Since the density represents the far field for all boundary data, the same
cell problem gives the boundary layer tails in two other cases.  First, for
nonhomogeneous Dirichlet data $u^0=g$ on $\bOmega$, the cell corrector term has a tangential
part, and the boundary data of $\tstar$ becomes
$d(n)\,\dnu+d_\tau(n)\,\partial_\tau u^0$ with
$d_\tau(n)=\int_{\T^d}\rho(\cdot,n)\,\chi_\tau$ and $\chi_\tau=\tau_j\chi^j$ the
cell corrector in the tangential direction $\tau$.  The same cell problem
provides $d_\tau$, and \texttt{bltail} computes it.  Second, Neumann and
transmission conditions lead to half-space problems with a flux condition.
In two dimensions the Neumann problem reduces by duality to the Dirichlet
problem for the rotated coefficient $\tilde a=Q^{T}a^{-1}Q$: the stream
function of the Neumann corrector is a Dirichlet corrector for $\tilde a$
whose data is the normal cell corrector of $\tilde a$ times the tangential
component of the homogenized flux~\cite[eq.~(47)]{MVneumann}, so the Neumann
tails are the Dirichlet tails $d$ of $\tilde a$, which for scalar $a$ is
$1/a$ (for inhomogeneous flux data the tangential tails $d_\tau$ of
$\tilde a$ enter as well).  This was carried out
for polygons with rational sides in~\cite{MVneumann},
where the extension to irrational sides and smooth domains was left open.
For smooth strictly convex domains, homogenization of Neumann problems with
oscillating first-order boundary data was later established, with sharp
rates, by Shen and Zhuge~\cite{SZ18}.
Through this reduction the results for the
Dirichlet problem carry over to the
Neumann problem in two dimensions, and the cell problem makes its tails
computable.  In three
dimensions the flux problems require the far field of flux data, which is
governed by the inverse of $\Nn$ on mean zero functions and is again
given by the cell problem, but with a different tail formula.  This is the subject of future work. 

We also presented an approximation to the boundary corrector, $\combo$, which uses only constant-coefficient solves.
We excluded flat sides with rational normal, since there the limit depends on the
phase of the side relative to the cell and requires a strip solve at that
phase, as in Section~\ref{sec:strip}. In this case, given the phase, $\ttilde$ could be taken the same as for smooth domains, and $\tstar$ modified on the flat side to have the correct limit $d(n;s)$ minus the average of the restricted data (which is not zero in this case). 
 A flat side with irrational normal has the unique limit $d(n)$~\cite{GVM12}, but at finite $\eps$ the boundary data
must have room to equidistribute along the side: the mode $k$ of $\chin$
oscillates along it with wavelength $\eps/|k\cdot n^\perp|$, so the side must
be long enough to capture this. Near a rational
direction of small period the side may behave like a rational one
with a phase.  We have not studied this regime, nor corners.  Extending the doubling to three dimensions
(where the cell problem is on $\T^3$ and the matrices are larger but the
algorithm is unchanged), to matrix coefficients (where the lifted equation
acquires cross terms between $\partial_t-\G$ and $\nabla^\perp$, and
\eqref{eq:CR} the corresponding terms) and to systems, and using the density $\rho(\cdot,n)$
itself for instance to study the regularity of $d(n)$ in $n$ at rational
directions (see~\cite{FK17,FZ19,AKMP17,SZ20}), are also future work.

\section*{Declarations}

\paragraph{Funding.}
The author was partially supported by NSF Grant DMS-2308200.

\paragraph{Use of AI.}
This work was carried out with substantial assistance from AI systems:
ChatGPT (OpenAI) in the early stages, and Claude (Anthropic) in the later
stages.
The author posed the problem of expressing the boundary layer tail $d(n)$
through quantities on the period cell alone, by looking for the
Dirichlet-to-Neumann operator as an operator square root.  The AI
assistants proposed a Riccati equation for this operator; the first such
equation was incorrect, and the correct cell equation emerged after several
iterations, through the change of variables~\eqref{eq:lift-def}, which came
from Claude, as did the use of the doubling algorithm to solve it.  Claude
also contributed to the derivations and proofs, wrote the code, ran the
numerical experiments, assisted with the literature search and the
bibliography, and drafted parts of the text.  All figures were produced by
code, not by generative AI; the code is publicly available (see below) and
was validated against independent computations (Section~\ref{sec:results}).
The author directed the work, checked all derivations and computations,
revised the text, and assumes responsibility for all content.

\paragraph{Code availability.}
The Python package \texttt{bltail} (MIT license), which implements the
Riccati doubling of Section~\ref{sec:marching} and the square root 
approximation and reproduces the doubling values and figures of
Sections~\ref{sec:results} and~\ref{sec:dcurve}, is available at \url{\codeurl}.

\FloatBarrier

\end{document}